\newcounter{mysubsection}[section]%
\newcounter{mysubsubsection}[mysubsection]%
\newtheorem{corollary}[mysubsection]{Corollary}%
\newtheorem{lemma}[mysubsection]{Lemma}%
\newtheorem{proposition}[mysubsection]{Proposition}%
\newtheorem{theorem}[mysubsection]{Theorem}%
\newtheorem{example}[mysubsection]{Example}%
\newtheorem{examples}[mysubsection]{Examples}%
\def\qed{{\unskip\nobreak\hfil\penalty50%
  \hskip2em\hbox{}\nobreak\hfil$\square$%
  \parfillskip=0pt\finalhyphendemerits=0\par}}
\newenvironment{proof}%
  {\par\addvspace{\medskipamount}%
    \upshape%
    {\slshape\scshape
    Proof\hskip\labelsep}}%
  {\qed%
    \addvspace{\medskipamount}}%
\newcommand\ideal[1]{\mathfrak{\lowercase{#1}}}
\newcommand\rMod{\textbf{Mod}-}
\newcommand\Ann{\textrm{Ann}}
\newcommand\Clt[3]{\mathrm{Cl}_{#1}^{#2}(#3)} %
\newcommand\lc{\textrm{lc}}
\newcommand\Max{\textrm{Max}}
\newcommand\sAm{\sigma_{A\setminus\ideal{m}}}
\newcommand\sAp{\sigma_{A\setminus\ideal{p}}}
\newcommand\sepa{\vspace*{-3mm}\setlength{\itemsep}{-2mm}}
\newcommand\serial[2]{{#1}\llbracket{#2}\rrbracket}
\newcommand\Spec{\textrm{Spec}}
\newcommand\Supp{\textrm{Supp}}
\newcommand\blfootnote[1]{%
  \begingroup
  \renewcommand\thefootnote{}\footnote{#1}%
  \addtocounter{footnote}{-1}%
  \endgroup}
\begin{document}
\title{An extension of $S$--noetherian rings and modules}
\author{P. Jara\\ {\normalsize Department of Algebra}\\ {\normalsize University of Granada}}
\date{}
\maketitle\blfootnote{pjara@ugr.es; \date{\today}}

\begin{abstract}
For any commutative ring $A$ we introduce a generalization of $S$--noetherian rings using a here\-ditary torsion theory $\sigma$ instead of a multiplicatively closed subset $S\subseteq{A}$. It is proved that if $A$ is a totally $\sigma$--noetherian ring, then $\sigma$ is of finite type, and that totally $\sigma$--noetherian is a local property.
\end{abstract}

\section*{Introduction}

In \cite{Hamann/Houston/Johnson:1988}, the authors study the problem of determining the structure of the polynomial ring $D[X]$, over an integral domain $D$ with field of fractions $K$, through the structure of the Euclidean domain $K[X]$. In particular, an ideal $\ideal{a}\subseteq{D[X]}$ is said to be \textbf{almost principal} whenever there exist a polynomial $F\in\ideal{a}$, of positive degree, and an element $0\neq{s}\in{D}$ such that $\ideal{a}s\subseteq{FD[X]}\subseteq\ideal{a}$. The integral domain $D$ is an \textbf{almost principal domain} whenever every ideal $\ideal{a}\subseteq{D[X]}$, which extends properly to $K[X]$, is almost principal.
Noetherian and integrally closed domains are examples of almost principal domains.

Later, in \cite{Anderson/Dumitrescu:2002}, the authors extend this notion to non--necessarily integral domains in defining, for a given multiplicatively closed subset $S\subseteq{A}$ of a ring $A$, an ideal $\ideal{a}\subseteq{A}$ to be \textbf{$S$--finite} if there exist a finitely generated ideal $\ideal{a}'\subseteq\ideal{a}$ and an element $s\in{S}$ such that $\ideal{a}s\subseteq\ideal{a}'$, and define a ring $A$ to be \textbf{$S$--noetherian} whenever every ideal $\ideal{a}\subseteq{A}$ is $S$--finite.
Many authors have worked on $S$--noetherian rings and related notions, and shown relevant results about its structure. See for instance
\cite{Eljeri:2018,
Hamed:2018,
Lim:2015,
Lim/Oh:2014,
Sevim/Tekir/Koc:2020,
Zhongkui:2007}.

The main aim of this paper is to give a new approach to $S$--noetherian rings and modules, and its applications, using the more abstract notion of hereditary torsion theory. From this new point of view, several results appear more evident and appear inlaid in a more general theory, which clarifies the original approach.

The background we use will be the hereditary torsion theories on a commutative (and unitary) ring $A$, see \cite{STENSTROM:1975, GOLAN:1986}, and we denote by $\rMod{A}$ the category of $A$--modules. Thus, a hereditary torsion theory $\sigma$ in $\rMod{A}$ is given by one of the following objects:
\begin{enumerate}[(1)]\sepa
\item
a \textbf{torsion class} $\mathcal{T}_\sigma$, a class of modules which is closed under submodules, homomorphic images, direct sums and group extensions,
\item
a \textbf{torsionfree class} $\mathcal{F}_\sigma$, a class of modules which is closed under submodules, essential extensions, direct products and group extensions,
\item
a \textbf{Gabriel filter} of ideals $\mathcal{L}(\sigma)$, a non--empty filter of ideals satisfying that every $\ideal{b}\subseteq{A}$, for which there exists an ideal $\ideal{a}\in\mathcal{L}(\sigma)$ such that $(\ideal{b}:a)\in\mathcal{L}(\sigma)$, for every $a\in\ideal{a}$, belongs to $\mathcal{L}(\sigma)$.
\item
a \textbf{left exact kernel functor} $\sigma:\rMod{A}\longrightarrow\rMod{A}$.
\end{enumerate}
The relationships between these notions are the following. If $\sigma$ is the left exact kernel functor, then
\[
\begin{array}{ll}
\mathcal{T}_\sigma=\{M\in\rMod{A}\mid\;\sigma{M}=M\},\\
\mathcal{F}_\sigma=\{M\in\rMod{A}\mid\;\sigma{M}=0\},\\
\mathcal{L}(\sigma)=\{\ideal{a}\subseteq{A}\mid\;A/\ideal{a}\in\mathcal{T}_\sigma\}.\\
\end{array}
\]
If $\mathcal{L}$ is the Gabriel filter of a hereditary torsion theory $\sigma$, and $\mathcal{T}$ is the torsion class, for any $A$--module $M$ we have:
\[
\sigma{M}
=\{m\in{M}\mid\;(0:m)\in\mathcal{L}\}
=\sum\{N\subseteq{M}\mid\;N\in\mathcal{T}\}.\\
\]

\begin{example}
\begin{enumerate}[(1)]\sepa
\item
Let $\Sigma\subseteq{A}$ be a multiplicatively closed subset, there exists a hereditary torsion theory, $\sigma_\Sigma$, defined by
\[
\mathcal{L}(\sigma_\Sigma)=\{\ideal{a}\subseteq{A}\mid\;\ideal{a}\cap\Sigma\neq\varnothing\}.
\]
Observe that $\sigma_\Sigma$ has a filter basis constituted by principal ideals. A hereditary torsion theory $\sigma$ such that $\mathcal{L}(\sigma)$ has a filter basis of principal ideals is called a \textbf{principal hereditary torsion theory}.
We can show there is a correspondence between principal hereditary torsion theories in $\rMod{A}$, and saturated multiplicatively closed subsets in $A$.
\item
For any ring $A$ the set $\mathcal{L}=\{\ideal{a}\subseteq{A}\mid\;\Ann(\ideal{a})=0\}$ is a Gabriel filter, it defines a hereditary torsion theory which we represent by $\lambda$.
\end{enumerate}
\end{example}

\medskip

This paper is organized in sections. In the first one we introduce totally $\sigma$--noetherian rings and modules and show that necessarily the hereditary torsion theory $\sigma$ is of finite type whenever the ring $A$ is totally $\sigma$--noetherian. In section two we study how prime ideals and prime submodules appears naturally when studying totally $\sigma$--finitely generated modules and obtain a relative version of Cohen's theorem. The natural notion of maximal condition in relation with totally $\sigma$--noetherian modules is studied in section three. Section four is devoted to study some extensions of totally $\sigma$--noetherian rings. In particular, we find that it is necessary to impose some extra conditions to $\sigma$ in order to assure that $A[X]$ is totally $\sigma$--noetherian whenever $A$ is. The local behaviour of totally $\sigma$--noetherian modules is studied in section four in which we can reduce to consider only prime ideals in $\mathcal{K}(\sigma)$. In the last section we study the particular case of principal ideal rings.

Through this paper we try to study $\sigma$--noetherian rings and modules and, in a parallel way, totally $\sigma$-noetherian rings and modules. The first one ($\sigma$--noetherian) has a categorical behaviour, but not the second one (totally $\sigma$--noetherian). For that  reason, the study of the last one is more difficult and it is not in the literature. Otherwise, it produces results  as Proposition~\eqref{pr:20191030} which shows that to be totally $\sigma$--noetherian, as opposed to $\sigma$-noetherian, is a local property.

\section{Totally $\sigma$--noetherian rings and modules}

For any $\sigma$--torsion finitely generated $A$--module $M$, say $M=m_1A+\cdots+m_tA$, since $(0:m_i)\in\mathcal{L}(\sigma)$, for any $i=1,\ldots,t$, if $\ideal{h}:=\cap_{i=1}^t(0:m_i)\in\mathcal{L}(\sigma)$, they satisfy $M\ideal{h}=0$. In general, this result does not hold for $\sigma$--torsion non--finitely generated $A$--modules. Therefore, we shall define an $A$--module $M$ to be \textbf{totally $\sigma$--torsion} whenever there exists $\ideal{h}\in\mathcal{L}(\sigma)$ such that $M\ideal{h}=0$. This notion of totally torsion appears, for instance, in \cite[page 462]{Jategaonkar:1971}.

For any ideal $\ideal{a}\subseteq{A}$ we have two different notions of finitely generated ideals relative to $\sigma$:
\begin{enumerate}[(1)]\sepa
\item
$\ideal{a}\subseteq{A}$ is \textbf{$\sigma$--finitely generated} whenever there exists a finitely generated ideal $\ideal{a}'\subseteq\ideal{a}$ such that $\ideal{a}/\ideal{a}'$ is $\sigma$--torsion.
\item
$\ideal{a}\subseteq{A}$ is \textbf{totally $\sigma$--finitely generated} whenever there exists a finitely generated ideal $\ideal{a}'\subseteq\ideal{a}$ such that $\ideal{a}/\ideal{a}'$ is totally $\sigma$--torsion.
\end{enumerate}

In the same way, for any ring $A$ we have two different notions of noetherian ring relative to $\sigma$:
\begin{enumerate}[(1)]\sepa
\item
$A$ is \textbf{$\sigma$--noetherian} if every ideal is $\sigma$--finitely generated.
\item
$A$ is \textbf{totally $\sigma$--noetherian} whenever every ideal is totally $\sigma$--finitely generated.
\end{enumerate}

\medskip
\begin{example}
\begin{enumerate}[(1)]\sepa
\item
Every finitely generated ideal is totally $\sigma$--finitely generated and every totally $\sigma$--finitely generated ideal is $\sigma$--finitely generated.
\item
Let $S\subseteq{A}$ be a multiplicatively closed subset, an ideal $\ideal{a}\subseteq{A}$ is $S$--finite if, and only if, it is totally $\sigma_S$--finitely generated. The ring $A$ is $S$--noetherian if, and only if, $A$ is totally $\sigma_S$--noetherian
\end{enumerate}
\end{example}

These two notions of torsion, and the notions derived from them, are completely different in its behaviour and its categorical properties. For instance, due to the definition, for any $A$--module $M$ there exists a maximum submodule belonging to $\mathcal{T}_\sigma$, the submodule: $\sigma{M}$, and it satisfies $M/\sigma{M}\in\mathcal{F}(\sigma)$. In the totally $\sigma$--torsion case we can not assure the existence of a maximal totally $\sigma$--torsion submodule. The existence of a maximum $\sigma$--torsion submodule allows us to build new concepts relative to $\sigma$ as lattices, closure operators and localizations; concepts that we have not in the totally $\sigma$--torsion case. Nevertheless, the totally $\sigma$--torsion case allows us to study arithmetic properties of rings and modules which are hidden with that use of $\sigma$--torsion, and these properties are those which we are interested in studying.

As we pointed out before, the $\sigma$--torsion allows, for any $A$--module $M$, to define a lattice \[
C(M,\sigma)=\{N\subseteq{M}\mid\;M/N\in\mathcal{F}_\sigma\},
\]
and a closure operator $\Clt{\sigma}{M}{-}:\mathcal{L}(M)\longrightarrow{C(M,\sigma)}\subseteq\mathcal{L}(M)$, from $\mathcal{L}(M)$, the lattice of all submodules of $M$, defined by the equation $\Clt{\sigma}{M}{N}/N=\sigma(M/N)$. The elements in $C(M,\sigma)$ are called the \textbf{$\sigma$--closed} submodules of $M$, and the lattice operations in $C(M,\sigma)$, for any $N_1,N_2\in{C(M,\sigma)}$, are defined by
\[
\begin{array}{ll}
N_1\wedge{N_2}=N_1\cap{N_2},\\
N_1\vee{N_2}=\Clt{\sigma}{M}{N_1+N_2}.
\end{array}
\]
Dually, the submodules $N\subseteq{M}$ such that $M/N\in\mathcal{T}_\sigma$ are called \textbf{$\sigma$--dense} submodules. The set of all $\sigma$--dense submodules of $M$ is represented by $\mathcal{L}(M,\sigma)$.

\medskip
In the following, we assume $A$ is a ring, $\rMod{A}$ is the category of $A$--modules and $\sigma$ is a hereditary torsion theory on $\rMod{A}$. Modules are represented by Latin letters: $M,N,N_1,\ldots$, and ideals by Gothics letters: $\ideal{a},\ideal{b},\ideal{b}_1,\ldots$ Different hereditary torsion theories will be represented by Greek letters: $\sigma,\tau,\sigma_1,\ldots$, and induced hereditary torsion theories by adorned Greek letters: $\sigma',\overline{\tau},\ldots$

The notions of (totally) $\sigma$--finitely generated and (totally) $\sigma$--noetherian can be extended to $A$--modules in an easy way. Properties on the behaviour of totally $\sigma$--finitely generated and $\sigma$--noethe\-rian modules are collected in the following result.

\medskip
\begin{proposition}
\begin{enumerate}[(1)]\sepa
\item
Every homomorphic image of a totally $\sigma$--finitely generated $A$--module also is .
\item
For every submodule $N\subseteq{M}$, we have: $M$ is totally $\sigma$--noetherian if, and only if, $N$ and $M/N$ are totally $\sigma$--noetherian.
\item
Finite direct sums of totally $\sigma$--noetherian modules also are.
\end{enumerate}
\end{proposition}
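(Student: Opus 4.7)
The plan is to prove the three parts in order, using (1) in the proof of (2) and (2) in the proof of (3).

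For (1), I would take a surjection $f:M\twoheadrightarrow M'$ with $M$ totally $\sigma$--finitely generated, witnessed by a finitely generated $M_0\subseteq M$ and $\ideal{h}\in\mathcal{L}(\sigma)$ with $M\ideal{h}\subseteq M_0$. Then $f(M_0)\subseteq M'$ is finitely generated and $M'\ideal{h}=f(M)\ideal{h}=f(M\ideal{h})\subseteq f(M_0)$, so $M'/f(M_0)$ is killed by $\ideal{h}$. This is the routine part.

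For (2), the $(\Rightarrow)$ direction is immediate: every submodule of $N$ is a submodule of $M$, so $N$ inherits the property; and every submodule of $M/N$ is the image of a submodule of $M$, so by (1) it is totally $\sigma$--finitely generated. The meat is in $(\Leftarrow)$. Given $L\subseteq M$, I would extract two witnesses: first, since $L\cap N\subseteq N$ is totally $\sigma$--finitely generated, pick a finitely generated $L_1\subseteq L\cap N$ and $\ideal{h}_1\in\mathcal{L}(\sigma)$ with $(L\cap N)\ideal{h}_1\subseteq L_1$; second, since $(L+N)/N\subseteq M/N$ is totally $\sigma$--finitely generated, I lift generators to elements $l_1,\dots,l_r\in L$, set $L_2=\sum l_iA$, and obtain $\ideal{h}_2\in\mathcal{L}(\sigma)$ with $(L+N)\ideal{h}_2\subseteq L_2+N$. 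A small diagram chase shows $L\ideal{h}_2\subseteq L_2+(L\cap N)$: any $lh$ for $l\in L$, $h\in\ideal{h}_2$ decomposes as $l_0+n$ with $l_0\in L_2$, $n\in N$, and $n=lh-l_0\in L$.

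The main obstacle, and the conceptual heart of the argument, is then to combine the two witnesses into a single membership in $\mathcal{L}(\sigma)$. I would use that a Gabriel filter is closed under products, so $\ideal{h}_2\ideal{h}_1\in\mathcal{L}(\sigma)$, and compute
\[
L\,\ideal{h}_2\ideal{h}_1\subseteq\bigl(L_2+(L\cap N)\bigr)\ideal{h}_1\subseteq L_2+(L\cap N)\ideal{h}_1\subseteq L_2+L_1,
\]
so with $L_0=L_1+L_2$ finitely generated and $\ideal{h}=\ideal{h}_2\ideal{h}_1\in\mathcal{L}(\sigma)$, we get $L\ideal{h}\subseteq L_0$, proving $L$ is totally $\sigma$--finitely generated.

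For (3), an easy induction suffices: if $M=M_1\oplus\cdots\oplus M_n$, then $M_1$ is totally $\sigma$--noetherian by hypothesis and $M/M_1\cong M_2\oplus\cdots\oplus M_n$ is by the inductive hypothesis, so (2) gives that $M$ is totally $\sigma$--noetherian. The only point requiring attention across the whole proof is confirming the product-closure of the Gabriel filter, which is where the "totally" hypothesis really earns its keep compared to the merely $\sigma$--torsion case.
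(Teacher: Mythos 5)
Your proof is correct; the paper states this proposition without proof, treating it as routine, and your argument is the standard one (decompose a submodule $L$ along the short exact sequence $0\to L\cap N\to L\to L/(L\cap N)\to 0$, combine the two witnesses $\ideal{h}_1,\ideal{h}_2$ via the product $\ideal{h}_2\ideal{h}_1$). One small correction to your closing remark: closure of a Gabriel filter under products of ideals is a general fact holding for every hereditary torsion theory, not something peculiar to the ``totally'' setting; what the ``totally'' notion actually demands is that a \emph{single} ideal of $\mathcal{L}(\sigma)$ annihilate the quotient, which is why you must track $\ideal{h}_1$ and $\ideal{h}_2$ explicitly and multiply them, whereas in the merely $\sigma$--finitely generated case one would instead invoke closure of the torsion class $\mathcal{T}_\sigma$ under extensions and never need to exhibit a common annihilating ideal.
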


The first restriction we have found in studying totally $\sigma$--noetherian rings is that the hereditary torsion theory $\sigma$ must be of an special type: it is a \textbf{finite type} hereditary torsion theory. This is an extension of principal hereditary torsion theories, and means that $\mathcal{L}(\sigma)$ has a filter basis constituted by finitely generated ideals.

\begin{proposition}
If $A$ is a totally $\sigma$--noetherian ring then $\sigma$ is of finite type.
\end{proposition}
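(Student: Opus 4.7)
The plan is to unpack both definitions and discover that the witness ideal for $\ideal{a}$ being totally $\sigma$--finitely generated automatically provides the finitely generated basis element needed for finite type. Concretely, I would reformulate ``$\sigma$ is of finite type'' in the standard equivalent form: every $\ideal{a}\in\mathcal{L}(\sigma)$ contains a finitely generated ideal $\ideal{b}\subseteq\ideal{a}$ with $\ideal{b}\in\mathcal{L}(\sigma)$ (this is the usual way filter bases of finitely generated ideals are produced, combined with the fact that $\mathcal{L}(\sigma)$ is upward closed).

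So I would fix $\ideal{a}\in\mathcal{L}(\sigma)$ and apply the totally $\sigma$--noetherian hypothesis to $\ideal{a}$: there exist a finitely generated ideal $\ideal{a}'\subseteq\ideal{a}$ and an ideal $\ideal{h}\in\mathcal{L}(\sigma)$ such that $\ideal{a}\ideal{h}\subseteq\ideal{a}'$. The finitely generated candidate is $\ideal{a}'$ itself; the only thing to verify is that $\ideal{a}'\in\mathcal{L}(\sigma)$.

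For that I would argue via the product: since $\mathcal{L}(\sigma)$ is closed under products of ideals, $\ideal{a}\ideal{h}\in\mathcal{L}(\sigma)$. The closure under products, which I would either cite from Stenstr\"om/Golan or sketch in one line, is seen from the exact sequence
\[
0\longrightarrow\ideal{a}/\ideal{a}\ideal{h}\longrightarrow A/\ideal{a}\ideal{h}\longrightarrow A/\ideal{a}\longrightarrow 0,
\]
where $A/\ideal{a}\in\mathcal{T}_\sigma$ by hypothesis and $\ideal{a}/\ideal{a}\ideal{h}$ is annihilated by $\ideal{h}\in\mathcal{L}(\sigma)$, hence lies in $\mathcal{T}_\sigma$; closure of $\mathcal{T}_\sigma$ under group extensions gives $A/\ideal{a}\ideal{h}\in\mathcal{T}_\sigma$. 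Then $\ideal{a}\ideal{h}\subseteq\ideal{a}'$ together with the filter property of $\mathcal{L}(\sigma)$ forces $\ideal{a}'\in\mathcal{L}(\sigma)$, finishing the proof.

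There is no real obstacle here beyond matching definitions; the only point that requires a moment of care is the passage from ``$\ideal{a}/\ideal{a}'$ is totally $\sigma$--torsion'' (which a priori only says that $\ideal{a}'$ contains a product $\ideal{a}\ideal{h}$ with $\ideal{h}\in\mathcal{L}(\sigma)$, not that $\ideal{a}'\in\mathcal{L}(\sigma)$) to the conclusion that $\ideal{a}'$ lies in the filter. This is exactly where one uses that $\ideal{a}$ was itself already in $\mathcal{L}(\sigma)$, so that the product $\ideal{a}\ideal{h}$ of two members of $\mathcal{L}(\sigma)$ stays in $\mathcal{L}(\sigma)$. Once that is observed, the argument is a short four--line chain.
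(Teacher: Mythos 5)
Your proposal is correct and takes essentially the same route as the paper: apply the totally $\sigma$--noetherian hypothesis to an arbitrary $\ideal{a}\in\mathcal{L}(\sigma)$, use closure of $\mathcal{L}(\sigma)$ under products to see $\ideal{a}\ideal{h}\in\mathcal{L}(\sigma)$, and then upward closure of the filter to conclude $\ideal{a}'\in\mathcal{L}(\sigma)$. The only difference is that you spell out the closure-under-products argument via the short exact sequence, which the paper simply cites as known.
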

\begin{proof}
For any $\ideal{a}\in\mathcal{L}(\sigma)$ there exists $\ideal{a}'\subseteq\ideal{a}$, finitely generated, and $\ideal{h}\in\mathcal{L}(\sigma)$ such that $\ideal{a}\ideal{h}\subseteq\ideal{a}'$. Since $\mathcal{L}(\sigma)$ is closed under product of ideals, we have $\ideal{h}'\in\mathcal{L}(\sigma)$.
\end{proof}

Directly from the definition we have that every totally $\sigma$--torsion module is totally $\sigma$--noetherian, and an $A$--module $M$ is totally $\sigma$--noetherian if, and only if it contains a finitely generated submodule $N\subseteq{M}$ such that $M/N$ is totally $\sigma$--torsion. Our aim is to explore more conditions equivalent to totally $\sigma$--noetherian. In this sense, since every totally $\sigma$--noetherian module is $\sigma$--noetherian, the question is what properties are necessary to add to $\sigma$--noetherianness to get totally $\sigma$--noetherian.

The next proposition is based on \cite[Proposition 2]{Anderson/Dumitrescu:2002}.

\begin{proposition}\label{pr:140706}
Let $\sigma$ be a finite type hereditary torsion theory, and $M$ be an $A$--module, the following statements are equivalent:
\begin{enumerate}[(a)]\sepa
\item
$M$ is totally $\sigma$--noetherian.
\item
$M$ is $\sigma$--noetherian, and for every $H\subseteq{M}$, finitely generated, there exists $\ideal{h}\in\mathcal{L}(\sigma)$, finitely generated, such that $\Clt{\sigma}{M}{H}=(H:\ideal{h})$.
\end{enumerate}
\end{proposition}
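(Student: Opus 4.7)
The plan is to establish the two implications separately; the easier direction is (b)$\Rightarrow$(a). Given an arbitrary submodule $N\subseteq M$, $\sigma$--noetherianness of $M$ yields a finitely generated $H\subseteq N$ such that $N/H$ is $\sigma$--torsion, so $N\subseteq\Clt{\sigma}{M}{H}$. By (b) there is a finitely generated $\ideal{h}\in\mathcal{L}(\sigma)$ with $\Clt{\sigma}{M}{H}=(H:\ideal{h})$; then $N\ideal{h}\subseteq H$, which makes $N/H$ totally $\sigma$--torsion and hence $N$ totally $\sigma$--finitely generated.

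For (a)$\Rightarrow$(b), I first note that $\sigma$--noetherianness is immediate, since every totally $\sigma$--finitely generated submodule is $\sigma$--finitely generated. The substantive task is, for a finitely generated $H\subseteq M$, to produce a finitely generated $\ideal{h}\in\mathcal{L}(\sigma)$ which annihilates $\Clt{\sigma}{M}{H}/H$. Since $M$ is totally $\sigma$--noetherian, so is its submodule $\Clt{\sigma}{M}{H}$, and hence the latter admits a finitely generated submodule $H_0$ with $\Clt{\sigma}{M}{H}/H_0$ totally $\sigma$--torsion; replacing $H_0$ by $H_0+H$ we may assume $H_0\supseteq H$ without losing finite generation of $H_0$ or the totally $\sigma$--torsion property of the quotient. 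Two pieces then combine: first, $\Clt{\sigma}{M}{H}/H_0$ is killed by some $\ideal{h}_1\in\mathcal{L}(\sigma)$; second, $H_0/H$ sits inside $\Clt{\sigma}{M}{H}/H=\sigma(M/H)$, is finitely generated and $\sigma$--torsion, hence totally $\sigma$--torsion by the observation opening the section, killed by some $\ideal{h}_2\in\mathcal{L}(\sigma)$. Using that $\sigma$ is of finite type, both $\ideal{h}_i$ can be chosen finitely generated, so $\ideal{h}:=\ideal{h}_1\ideal{h}_2\in\mathcal{L}(\sigma)$ is also finitely generated. The chain $\Clt{\sigma}{M}{H}\cdot\ideal{h}\subseteq H_0\ideal{h}_2\subseteq H$ yields $\Clt{\sigma}{M}{H}\subseteq(H:\ideal{h})$, and the reverse inclusion is formal: if $m\ideal{h}\subseteq H$ with $\ideal{h}\in\mathcal{L}(\sigma)$, then the annihilator of the class of $m$ in $M/H$ contains $\ideal{h}$, so lies in $\mathcal{L}(\sigma)$, whence $m\in\Clt{\sigma}{M}{H}$.

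The main obstacle I anticipate is the bookkeeping in (a)$\Rightarrow$(b): one has to enlarge $H_0$ to contain $H$ without losing the totally $\sigma$--torsion property of the quotient, combine two separate witnesses $\ideal{h}_1,\ideal{h}_2$ into a single one using closure of $\mathcal{L}(\sigma)$ under products, and invoke the finite-type hypothesis on $\sigma$ to keep the final ideal finitely generated. Once the correct $\ideal{h}$ is constructed, both inclusions $\Clt{\sigma}{M}{H}\subseteq(H:\ideal{h})$ and $(H:\ideal{h})\subseteq\Clt{\sigma}{M}{H}$, as well as the converse implication (b)$\Rightarrow$(a), are essentially immediate from the definitions of $\sigma$--closure and $\mathcal{L}(\sigma)$.
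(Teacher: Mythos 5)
Your proof is correct and takes essentially the same route as the paper's. In the (a)\,$\Rightarrow$\,(b) direction, your $H_0$, $\ideal{h}_1$, $\ideal{h}_2$ play exactly the roles of the paper's $H$, $\ideal{h}$, $\ideal{b}$ (the paper uses $N$ for the statement's $H$), and both arguments hinge on the same two facts: totally $\sigma$--noetherian passes to the submodule $\Clt{\sigma}{M}{H}$, and a finitely generated $\sigma$--torsion module is totally $\sigma$--torsion when $\sigma$ is of finite type; your explicit enlargement $H_0\mapsto H_0+H$ is harmless extra bookkeeping (the paper instead applies the second fact to $H_0/(H_0\cap H)$ without assuming $H\subseteq H_0$). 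The (b)\,$\Rightarrow$\,(a) direction and the closing inclusion $(H:\ideal{h})\subseteq\Clt{\sigma}{M}{H}$ are identical to the paper's.
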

\begin{proof}
(a) $\Rightarrow$ (b). %
If $\Clt{\sigma}{M}{N}$ is totally $\sigma$--finitely generated, there exist $H\subseteq\Clt{\sigma}{M}{N}$, finitely generated, and $\ideal{h}\in\mathcal{L}(\sigma)$, finitely generated, such that $\Clt{\sigma}{M}{N}\ideal{h}\subseteq{H}\subseteq\Clt{\sigma}{M}{N}$. There exists $\ideal{b}\in\mathcal{L}(\sigma)$, finitely generated, such that $H\ideal{b}\subseteq{N}$. Therefore,
\[
N\ideal{h}\ideal{b}
\subseteq\Clt{\sigma}{M}{N}\ideal{h}\ideal{b}
\subseteq{H\ideal{h}\ideal{b}}
\subseteq{H\ideal{b}}
\subseteq{N}.
\]
In particular, $\Clt{\sigma}{M}{N}\subseteq(N:\ideal{h}\ideal{b})$. On the other hand, $(N:\ideal{h}\ideal{b})\ideal{h}\ideal{b}\subseteq{N}$, hence $\Clt{\sigma}{M}{N}=(N:\ideal{h}\ideal{b})$.
\par
(b) $\Rightarrow$ (a). %
Let $N\subseteq{M}$, there is $H\subseteq{N}$, finitely generated, such that $\Clt{\sigma}{M}{N}=\Clt{\sigma}{M}{H}$, and there exists $\ideal{h}\in\mathcal{L}(\sigma)$, finitely generated such that $\Clt{\sigma}{M}{H}=(H:\ideal{h})$. Therefore we have:
\[
N\ideal{h}\subseteq\Clt{\sigma}{M}{N}\ideal{h}=\Clt{\sigma}{M}{H}\ideal{h}\subseteq{H}\subseteq{N}.
\]
\end{proof}

We know how to induce hereditary torsion theories through a ring map. Here we shall study the particular case of a ring map $f:A\longrightarrow{B}$ such that every ideal of $B$ is extended of an ideal of $A$, i.e., for any ideal $\ideal{b}\subseteq{B}$ there exists an ideal $\ideal{a}\subseteq{A}$ such that $f(\ideal{a})B=\ideal{b}$

Let $\sigma$ be a hereditary torsion theory in $\rMod{A}$, then $f(\sigma)$ is a hereditary torsion theory in $\rMod{B}$ and its Gabriel filter is
\[
\mathcal{L}(f(\sigma))=\{\ideal{b}\subseteq{B}\mid\;f^{-1}(\ideal{b})\in\mathcal{L}(\sigma)\}.
\]
It is clear that $f(\sigma)$ is of finite type whenever $\sigma$ is.

In this situation we have:

\begin{lemma}
Let $\sigma$ be a finite type hereditary torsion theory, $f:A\longrightarrow{B}$ be a ring map such that every ideal of $B$ is an extended ideal. In this case $\mathcal{L}(f(\sigma))=\{f(\ideal{a})B\mid\;\ideal{a}\in\mathcal{L}(\sigma)\}$.
\par
If $A$ is totally $\sigma$--noetherian, then $B$ is totally $f(\sigma)$--noetherian.
\end{lemma}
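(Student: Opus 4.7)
The statement has two parts: an explicit description of $\mathcal{L}(f(\sigma))$ in terms of extended ideals, and the transfer of the total $\sigma$-noetherian property. My plan is to dispatch (i) first by a direct filter argument, then use (i) together with a single push-forward calculation to handle (ii).

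For the description $\mathcal{L}(f(\sigma))=\{f(\ideal{a})B\mid\ideal{a}\in\mathcal{L}(\sigma)\}$, I would observe first that for any ideal $\ideal{b}\subseteq B$, the hypothesis that every ideal of $B$ is extended implies $\ideal{b}=f(f^{-1}(\ideal{b}))B$: writing $\ideal{b}=f(\ideal{c})B$ we have $\ideal{c}\subseteq f^{-1}(\ideal{b})$, which sandwiches $\ideal{b}$ between $f(\ideal{c})B$ and $f(f^{-1}(\ideal{b}))B$. The inclusion $\supseteq$ then follows because $\ideal{a}\subseteq f^{-1}(f(\ideal{a})B)$ and $\mathcal{L}(\sigma)$ is upward closed. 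The inclusion $\subseteq$ follows by setting $\ideal{a}=f^{-1}(\ideal{b})\in\mathcal{L}(\sigma)$ and using the sandwich above.

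For (ii), let $\ideal{b}\subseteq B$ be an arbitrary ideal and set $\ideal{a}=f^{-1}(\ideal{b})$, so that $\ideal{b}=f(\ideal{a})B$ by the preceding paragraph. Since $A$ is totally $\sigma$-noetherian, there exist a finitely generated ideal $\ideal{a}'\subseteq\ideal{a}$ and some $\ideal{h}\in\mathcal{L}(\sigma)$ (which by finite type may be taken finitely generated, though this is not needed here) such that $\ideal{a}\ideal{h}\subseteq\ideal{a}'$. I would then define $\ideal{b}':=f(\ideal{a}')B$, which is finitely generated since $\ideal{a}'$ is, and set $\ideal{h}':=f(\ideal{h})B\in\mathcal{L}(f(\sigma))$ by part (i). A routine check (using commutativity of $B$) gives $f(\ideal{a})B\cdot f(\ideal{h})B=f(\ideal{a}\ideal{h})B$, and therefore
\[
\ideal{b}\ideal{h}'=f(\ideal{a})B\cdot f(\ideal{h})B=f(\ideal{a}\ideal{h})B\subseteq f(\ideal{a}')B=\ideal{b}'\subseteq\ideal{b},
\]
which says exactly that $\ideal{b}/\ideal{b}'$ is totally $f(\sigma)$-torsion, so $\ideal{b}$ is totally $f(\sigma)$-finitely generated.

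There is not really a main obstacle here; the only small subtlety is making sure the hypothesis that every ideal of $B$ is extended is used in the one place it is needed, namely to identify $\ideal{b}$ as the extension of $f^{-1}(\ideal{b})$ so that both $\ideal{b}'\subseteq\ideal{b}$ and the product computation $\ideal{b}\ideal{h}'=f(\ideal{a}\ideal{h})B$ go through cleanly. Everything else is bookkeeping with Gabriel filters and extensions of ideals.
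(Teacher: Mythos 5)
Your argument is correct and, for the transfer statement, is essentially the same as the paper's: identify $\ideal{b}$ as $f(\ideal{a})B$ for some $\ideal{a}$, apply totally $\sigma$--noetherian to $\ideal{a}$, and push $\ideal{a}'$ and $\ideal{h}$ through $f$ to get $\ideal{b}f(\ideal{h})B\subseteq f(\ideal{a}')B\subseteq\ideal{b}$. You additionally supply a clean proof of the description $\mathcal{L}(f(\sigma))=\{f(\ideal{a})B\mid\ideal{a}\in\mathcal{L}(\sigma)\}$ (via $\ideal{b}=f(f^{-1}(\ideal{b}))B$ and upward closedness of $\mathcal{L}(\sigma)$), which the paper states in the lemma but does not actually verify in its proof.
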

\begin{proof}
Let $\ideal{b}\subseteq{B}$, there exists $\ideal{a}\subseteq{A}$ such that $\ideal{b}=f(\ideal{a})B$. There exists $\ideal{h}\in\mathcal{L}(\sigma)$, finitely generated, such that $\ideal{a}\ideal{h}\subseteq\ideal{a}'\subseteq\ideal{a}$, for some finitely generated ideal $\ideal{a}'\subseteq\ideal{a}$. Therefore, $\ideal{b}f(\ideal{h})B=f(\ideal{a})f(\ideal{h})B\subseteq{f(\ideal{a}')}B\subseteq\ideal{b}$.
\end{proof}

Examples of this situation are the following:
\begin{enumerate}[(1)]\sepa
\item
$B$ is the quotient of a ring $A$ by an ideal $\ideal{a}$, i.e., $p:A\longrightarrow{A/\ideal{a}}$.
\item
$B$ is the localized ring of $A$ at a multiplicatively closed subset $\Sigma\subseteq{A}$, i.e., $q:A\longrightarrow\Sigma^{-1}A$.
\end{enumerate}

\section{Prime ideals}

If $\sigma$ is a hereditary torsion theory in $\rMod{A}$, it is well known that for any prime ideal $\ideal{p}\subseteq{A}$ we have either $\ideal{p}\in{C(A,\sigma)}$ or $\ideal{p}\in\mathcal{L}(\sigma)$, i.e., either $A/\ideal{p}$ is $\sigma$--torsionfree or $A/\ideal{p}$ is $\sigma$--torsion. In consequence, $\sigma$ produces a partition of $\Spec(A)$ in two sets: $\Spec(A)=\mathcal{K}(\sigma)\cup\mathcal{Z}(\sigma)$, with $\mathcal{K}(\sigma)\subseteq{C(A,\sigma)}$, and $\mathcal{Z}(\sigma)\subseteq\mathcal{L}(\sigma)$. In addition, for every $\ideal{p}\in\mathcal{K}(\sigma)$ we have $\sigma\leq\sAp$, and $\sigma=\wedge\{\sAp\mid\;\ideal{p}\in\mathcal{K}(\sigma)\}$, whenever $\sigma$ is of finite type.

\begin{proposition}
Let $\sigma$ be a finite type hereditary torsion theory in $\rMod{A}$, and let $M$ be a totally $\sigma$--finitely generated module. If $N\subseteq{M}$ is maximal among all non--totally $\sigma$--finitely generated submodules of $M$, then $(N:M)$ is a prime ideal.
\end{proposition}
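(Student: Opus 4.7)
The plan is to mimic Cohen's classical argument that a maximal non--finitely generated ideal is prime, adapted to the closure operator $\Clt{\sigma}{M}{-}$. First I would check that $(N:M)$ is proper: otherwise $N=M$, and then $N$ would inherit from $M$ the property of being totally $\sigma$--finitely generated, contradicting its choice. Next I would argue by contradiction, assuming there exist $a,b\in A$ with $ab\in(N:M)$ but $a,b\notin(N:M)$, and derive that $N$ itself must be totally $\sigma$--finitely generated. Denote by $(N:a)$ the submodule $\{m\in M\mid am\in N\}$. Since $abM\subseteq N$ we have $bM\subseteq(N:a)$, and since $bM\not\subseteq N$ this forces $N\subsetneq(N:a)$. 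Similarly $N\subsetneq N+aM$. By the maximality of $N$, both $(N:a)$ and $N+aM$ are totally $\sigma$--finitely generated.

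Using that $\sigma$ is of finite type, I would select finitely generated witnesses: a finite set of elements $n_j+am_j\in N+aM$, with $n_j\in N$ and $m_j\in M$, generating a submodule $K\subseteq N+aM$; a finitely generated submodule $L\subseteq (N:a)$; and finitely generated ideals $\ideal{h},\ideal{k}\in\mathcal{L}(\sigma)$ with $(N+aM)\ideal{h}\subseteq K$ and $(N:a)\ideal{k}\subseteq L$. The candidate finitely generated submodule of $N$ that should yield the contradiction is
\[
N':=\sum_j n_j A+aL,
\]
which lies in $N$ because $aL\subseteq N$ by the very definition of $(N:a)$. I would then verify the inclusion $N\ideal{h}\ideal{k}\subseteq N'$: for $n\in N$ and $h\in\ideal{h}$ one has $nh\in K$, hence $nh=\sum_j(n_j+am_j)r_j$ for some $r_j\in A$; rearranging gives $a\sum_j m_jr_j=nh-\sum_j n_jr_j\in N$, so $\sum_j m_jr_j\in(N:a)$, and multiplying by any $k\in\ideal{k}$ puts this element in $L$. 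Consequently $nhk=\sum_j n_j(kr_j)+a\bigl(k\sum_j m_j r_j\bigr)\in N'$. Since $\ideal{h}\ideal{k}$ is a finitely generated ideal in $\mathcal{L}(\sigma)$, this exhibits $N$ as totally $\sigma$--finitely generated, the desired contradiction.

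The main obstacle I expect is the bookkeeping in the middle paragraph: one must combine the two totally $\sigma$--finite witnesses so that the single finitely generated submodule $N'$, which mixes generators of $K$ with the image $aL$, captures $N$ modulo one single $\sigma$--dense ideal. The finite type hypothesis enters precisely when choosing $\ideal{h}$ and $\ideal{k}$ to be finitely generated, so that the product $\ideal{h}\ideal{k}$ stays a finitely generated ideal in $\mathcal{L}(\sigma)$ and the totally $\sigma$--finiteness of $N$ really can be read off from the inclusion $N\ideal{h}\ideal{k}\subseteq N'$.
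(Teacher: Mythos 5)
Your proof is correct and follows essentially the same strategy as the paper: assume $(N:M)$ is not prime, observe that $(N:a)$ and $N+aM$ strictly contain $N$ and hence are totally $\sigma$--finitely generated by maximality, and then combine the two witnesses to exhibit a finitely generated $N'\subseteq N$ with $N\ideal{h}\ideal{k}\subseteq N'$. The paper uses a single ideal from $\mathcal{L}(\sigma)$ for both inclusions and writes the finite generating set as $\{n_jh'_l,\,g_kb\}$, while you keep two ideals $\ideal{h},\ideal{k}$ and use the slightly cleaner candidate $\sum_j n_jA + aL$, but these are cosmetic differences, not a different route.
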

\begin{proof}
Let $\ideal{p}=(N:M)$; if $\ideal{p}$ is not prime, there exist $a,b\in{A\setminus\ideal{p}}$ such that $ab\in\ideal{p}$. As a consequence, since $N\subsetneqq{N+Mb}$, then $N+Mb$ is totally $\sigma$--finitely generated, and $N,Ma\subseteq(N:b)$, hence $N\subsetneqq{N+Ma}\subseteq(N:b)$, then $(N:b)$ is totally $\sigma$--finitely generated.
\par
Let $\ideal{h}=\langle{h_1,\ldots,h_s}\rangle\in\mathcal{L}(\sigma)$ such that $(N+Mb)\ideal{h}\subseteq{F}=\langle{f_1,\ldots,f_t}\rangle\subseteq{N+Mb}$, and $(N:b)\ideal{h}\subseteq{G}=\langle{g_1,\ldots,g_r}\rangle\subseteq(N:b)$.
Let $f_j=n_j+m_jb$ for any $j=1,\ldots,t$, where $n_j\in{N}$ and $m_j\in{M}$.
\par
For any $n\in{N}$ and any $h_i\in\ideal{h}$ there exists an $A$--linear combination $nh_i=\sum_j(n_j+m_jb)c_{i,j}$, with $c_{i,j}\in{A}$. Let $x_i=\sum_jm_jc_{i,j}$; therefore $x_ib=\sum_jm_jc_{i,j}b=na_i+\sum_jn_jc_{i,j}\in{N}$, hence $x_i\in(N:b)$.
\par
Let us represent now the generators of $\ideal{h}$ by $h'_l$. For any $h'_l$ we have $x_ih'_l\in{G}$, and there exists an $A$--linear combination $x_ih'_l=\sum_kg_kd_{l,i,k}$, with $d_{l,i,k}\in{A}$. Thus we have:
\[
\begin{array}{ll}
nh_ih'_l
&=(nh_i)h'_l
 =(\sum_jn_jc_{i,j}+\sum_jm_jc_{i,j}b)h'_l
 =\sum_jn_jc_{i,j}h'_l+\sum_jm_jc_{i,j}bh'_l\\
&=\sum_j(n_jh'_l)c_{i,j}+x_ibh'_l
 =\sum_j(n_jh'_l)c_{i,j}+(\sum_kg_kd_{l,i,k})b\\
&=\sum_j(n_jh'_l)c_{i,j}+\sum_k(g_kb)d_{l,i,k}
 \in\langle{n_jh'_l,g_kb\mid\;l,j=1,\ldots,s;\;k=1,\ldots,r}\rangle\subseteq{N}.
\end{array}
\]
In particular,
$N\ideal{h}\ideal{h}\subseteq
\langle{n_jh'_l,g_kb\mid\;l,j=1,\ldots,s;\;k=1,\ldots,r}\rangle\subseteq{N}$.
This means that $N$ is totally $\sigma$--finitely generated, which is a contradiction.
\end{proof}

If $N\subseteq{M}$ is maximal among the non totally $\sigma$--finitely generated submodules, is it a prime submodule? We know that it holds in the case of finitely generated modules. Let us prove it now for totally $\sigma$--finitely generated modules.

\begin{proposition}
Let $\sigma$ be a finite type hereditary torsion theory, and $M$ be a totally $\sigma$--finitely generated $A$--module. Any $N\subseteq{M}$, maximal among the submodules of $M$ which are not totally $\sigma$--finitely generated, is a prime submodule.
\end{proposition}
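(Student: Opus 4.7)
The plan is to argue by contradiction, noting that the failure of $N$ to be a prime submodule supplies us with the extra witness $m\in M$ that lets the computation of the previous proposition be recycled almost verbatim.

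Suppose $N$ is not a prime submodule, so there exist $r\in A$ and $m\in M$ with $rm\in N$, $m\notin N$ and $r\notin(N:M)$. From $r\notin(N:M)$ it follows that $Mr\not\subseteq N$, hence $N\subsetneq N+Mr$; by the maximality of $N$ among submodules that are not totally $\sigma$--finitely generated, $N+Mr$ must be totally $\sigma$--finitely generated. On the other hand, $m\in(N:r):=\{x\in M\mid xr\in N\}$ while $m\notin N$, so $N\subsetneq(N:r)$, and $(N:r)\neq M$ because $r\notin(N:M)$; hence $(N:r)$ is also totally $\sigma$--finitely generated by maximality.

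Since $\sigma$ is of finite type and $\mathcal{L}(\sigma)$ is closed under products of ideals, I would now choose a single finitely generated $\ideal{h}\in\mathcal{L}(\sigma)$ witnessing both facts: $(N+Mr)\ideal{h}\subseteq F=\langle f_1,\ldots,f_t\rangle\subseteq N+Mr$, with $f_j=n_j+m_j r$ for some $n_j\in N$ and $m_j\in M$, and $(N:r)\ideal{h}\subseteq G=\langle g_1,\ldots,g_s\rangle\subseteq(N:r)$, so that each $g_k r\in N$. The rest is then the same two-step $\ideal{h}^2$ computation as in the preceding proof, with $r$ here playing the role that $b$ played there. For $n\in N$ and $h_i\in\ideal{h}$ one writes $nh_i=\sum_j n_j c_{i,j}+x_i r$ with $x_i=\sum_j m_j c_{i,j}\in M$; then $x_i r=nh_i-\sum_j n_j c_{i,j}\in N$, so $x_i\in(N:r)$, and for a second generator $h'_l\in\ideal{h}$ we get $x_i h'_l=\sum_k g_k d_{l,i,k}$. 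Commutativity of $A$ then yields
\[
nh_i h'_l=\sum_j(n_j h'_l)c_{i,j}+\sum_k(g_k r)d_{l,i,k},
\]
which shows that $N\ideal{h}^2$ is contained in the finitely generated submodule of $N$ generated by the fixed elements $n_j h'_l$ and $g_k r$. Since $\ideal{h}^2\in\mathcal{L}(\sigma)$, this contradicts the choice of $N$.

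The only real obstacle is spotting the correct translation of the earlier setup: in the previous proposition there were two independent ring elements $a,b$ with $ab\in(N:M)$, whereas here there is just one ring element $r$ and one module witness $m$. Recognizing that $r$ should be fed into both $N+Mr$ and $(N:r)$, with $m$ providing the strict containment $N\subsetneq(N:r)$ that replaces the earlier containment $N+Ma\subseteq(N:b)$, is what makes the rest of the computation transfer without modification.
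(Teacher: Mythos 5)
Your proof is correct and follows essentially the same line as the paper's own argument: take the prime-submodule witnesses $m\in M\setminus N$ and $r\in A\setminus(N:M)$ with $rm\in N$, use maximality of $N$ to get totally $\sigma$--finite generation of $N+Mr$ and $(N:r)$, and then run the same two-step $\ideal{h}\ideal{h}$ computation to trap $N\ideal{h}^2$ inside a finitely generated submodule of $N$. The only cosmetic differences are notational (you write $r$ for the paper's $a$), and the observation that $(N:r)\neq M$ is harmless but superfluous, since $M$ itself is totally $\sigma$--finitely generated by hypothesis.
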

\begin{proof}
Let $N\subseteq{M}$ such a maximal submodule. If $N\subseteq{M}$ is not prime, there exist $m\in{M\setminus{N}}$ and $a\in{A}\setminus(N:M)$ such that $ma\in{N}$.
\par
Since $a\notin(N:M)$, then $Ma\nsubseteq{N}$, and $N\subsetneqq{N+Ma}$ is totally $\sigma$--finitely
generated. On the other hand, since $m\in(N:a)\setminus{N}$, then $N\subsetneqq(N:a)$ is totally $\sigma$--finitely generated. Therefore, there exist a finitely generated submodules $F=\langle{f_1,\ldots,f_r}\rangle\subseteq{N+Ma}$ and $G=\langle{g_1,\ldots,g_s}\rangle\subseteq(N:a)$, and $\ideal{h}=\langle{h_1,\ldots,h_t}\rangle\in\mathcal{L}(\sigma)$, finitely generated, such that $(N+Ma)\ideal{h}\subseteq{F}\subseteq{N+Ma}$ and $(N:a)\ideal{h}\subseteq{G}\subseteq(N:a)$. Say $f_i=n_i+m_ia$ for $i=1,\ldots,r$, $n_i\in{N}$ and $m_i\in{M}$.
\par
For any $n\in{N}$ and $h_i\in\{h_1,\ldots,h_t\}$, since $nh_i\in{F}$, there exists an $A$--linear combination $nh_i=\sum_lf_lc_{i,l}=\sum_ln_lc_{i,l}+\sum_lm_hc_{i,l}a$, and $(\sum_lm_lc_{i,l})a=n-\sum_ln_lc_{i,l}\in{N}$. In consequence $\sum_lm_lc_{i,l}\in(N:a)$.
\par
For any $h_j\in\{h_1,\ldots,h_t\}$ we have $(\sum_lm_lc_{i,l})h_j\in{G}$, and there exists an $A$--linear combination
$$
\left(\sum_lm_lc_{i,l}\right)h_j=\sum_kg_kd_{i,j,k}\textrm{, with }d_{i,j,k}\in{A}.
$$
Therefore, we have
\begin{multline*}
nh_ih_j
=\left(\sum_ln_lc_{i,l}+\sum_lm_lc_{i,l}a\right)h_j
=\sum_ln_lc_{i,l}h_j+\sum_lm_lc_{i,l}h_ja\\
=\sum_ln_lc_{i,l}h_j+\sum_kg_kd_{i,j,k}a,
\end{multline*}
which means that $n\ideal{h}\ideal{h}$ is contained in the submodule of $N$ generated by $\{n_1,\ldots,n_r\}$ and $\{g_1a,\ldots,g_sa\}$.
\end{proof}

The next one is a properly result of finite type hereditary torsion theories.

\begin{lemma}
Let $\sigma$ be a finite type hereditary torsion theory. For every totally $\sigma$--finitely generated $A$--module $M$, and any $L\in{C(M,\sigma)}$, $L\subsetneqq{M}$, there exists a maximal element $N\in{C(M,\sigma)}$ such that $L\subseteq{N}$. In addition, if  $\Gamma=\{N\subseteq{M}\mid\;L\subseteq{N}\in{C(M,\sigma)},\,N\neq{M}\}$, every maximal element in $\Gamma$ is a prime submodule of $M$.
\end{lemma}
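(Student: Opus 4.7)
The plan is to apply Zorn's lemma to $\Gamma$ ordered by inclusion, and then, for a maximal element, argue primeness through a quotient--submodule trick exploiting that $\Gamma$ lies inside $C(M,\sigma)$. Since $L \in \Gamma$ (as $L \in C(M,\sigma)$ and $L \neq M$), the set is nonempty, so the work is to verify that every chain in $\Gamma$ has an upper bound in $\Gamma$.

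Given a chain $\{N_i\}$ in $\Gamma$, set $N = \bigcup_i N_i$ and check two things. First, $N \in C(M,\sigma)$: if $\bar{x} \in M/N$ has annihilator in $\mathcal{L}(\sigma)$, use that $\sigma$ is of finite type to pick a finitely generated $\ideal{h} = \langle h_1,\ldots,h_n\rangle \in \mathcal{L}(\sigma)$ with $x\ideal{h} \subseteq N$; by directedness, the finitely many elements $xh_k$ land in a common $N_i$, so $x\ideal{h} \subseteq N_i$, and then $N_i \in C(M,\sigma)$ forces $x \in N_i \subseteq N$. Second, $N \neq M$: since $M$ is totally $\sigma$--finitely generated and $\sigma$ is of finite type, there exist a finitely generated $H \subseteq M$ and a finitely generated $\ideal{h} \in \mathcal{L}(\sigma)$ with $M\ideal{h} \subseteq H$; were $N = M$, then $H \subseteq N_i$ for some $i$, giving $M\ideal{h} \subseteq N_i$, whence the previous argument applied to every $m \in M$ would force $M \subseteq N_i$, contradicting $N_i \neq M$. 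Zorn's lemma then yields a maximal element of $\Gamma$.

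For primeness, let $N$ be maximal in $\Gamma$ and suppose $a \in A$, $m \in M \setminus N$ satisfy $am \in N$. Consider $(N:a) = \{x \in M : ax \in N\}$. The multiplication--by--$a$ map on $M/N$ has kernel $(N:a)/N$ and image $(aM + N)/N$, so $M/(N:a)$ embeds in $M/N \in \mathcal{F}_\sigma$; hence $(N:a) \in C(M,\sigma)$. Since $m \in (N:a) \setminus N$ and $L \subseteq N \subseteq (N:a)$, maximality of $N$ forces $(N:a) = M$, i.e.\ $aM \subseteq N$. Thus $N$ is a prime submodule of $M$.

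The main obstacle I anticipate is proving $N \neq M$ in the chain step: this is the one place where total $\sigma$--finite generation of $M$ (rather than mere $\sigma$--finite generation) is essential, and it has to be combined with the finite type hypothesis on $\sigma$ to trap finitely many generators of $\ideal{h}$ and of $H$ inside a single $N_i$. The primeness step, by contrast, is cheaper than the analogous result for submodules maximal among non--totally--$\sigma$--finitely--generated ones, because here we are working inside $C(M,\sigma)$ and $\sigma$--closedness of $(N:a)$ comes for free from closure of $\mathcal{F}_\sigma$ under submodules.
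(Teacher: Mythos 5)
Your proof is correct and, for the Zorn's--lemma half, takes essentially the paper's route with a small reorganization: the paper forms $N=\bigcup_i N_i$ and passes to $\Clt{\sigma}{M}{N}$, distinguishing $\Clt{\sigma}{M}{N}\neq M$ (upper bound) from $\Clt{\sigma}{M}{N}=M$ (contradiction), whereas you show directly that the union $N$ is already $\sigma$--closed — which is correct, since for a finite type $\sigma$ a directed union of $\sigma$--closed submodules is again $\sigma$--closed — and then rule out $N=M$. Both variants rest on the same two facts: the finite type hypothesis lands a finitely generated member of $\mathcal{L}(\sigma)$ inside some $N_i$, and finite generation of $M$ modulo torsion forces $N_i=M$, contradicting $N_i\in\Gamma$. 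Where your write--up genuinely supplements the paper is the primeness claim: the paper's own proof stops after the chain argument and never verifies that a maximal element of $\Gamma$ is a prime submodule. Your observation that $M/(N:a)$ embeds in $M/N\in\mathcal{F}_\sigma$ via multiplication by $a$, hence $(N:a)\in C(M,\sigma)$, and that maximality then forces $(N:a)=M$ whenever $am\in N$ with $m\notin N$, is exactly the missing piece; it is also structurally cheaper than the elementwise $nh_ih_j$ computations the paper runs in the two preceding propositions, because $\sigma$--closedness is preserved under $(-:a)$ for free while totally--$\sigma$--finite generation is not. One minor remark: for the step $N\neq M$, plain $\sigma$--finite generation of $M$ already suffices — once a finitely generated $H$ with $M/H$ $\sigma$--torsion sits inside some $N_i$, the module $M/N_i$ is simultaneously a quotient of the $\sigma$--torsion module $M/H$ and $\sigma$--torsionfree, so it is zero — and the uniform $\ideal{h}$ supplied by total $\sigma$--finite generation is not strictly needed there.
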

\begin{proof}
Let $\{N_i\}_i$ be a chain in $\Gamma$, and we define $N=\cup_{i\in{I}}N_i$.
If $\Clt{\sigma}{M}{N}\neq{M}$, then $\Clt{\sigma}{M}{N}$ is an upper bound of the chain in $\Gamma$.
If $\Clt{\sigma}{M}{N}=M$, since there exist $m_1,\ldots,m_t\in{M}$ and $\ideal{h}\in\mathcal{L}(\sigma)$, finitely generated, such that $M\ideal{h}\subseteq(m_1,\ldots,m_t)A\subseteq{M}$, then there exists $\ideal{b}\in\mathcal{L}(\sigma)$, finitely generated, such that $(m_1,\ldots,m_t)\ideal{b}\subseteq\cup_iN_i=N$; therefore, there exists an index $i$ such that $(m_1,\ldots,m_t)\ideal{b}\subseteq{N_i}$. In consequence, $N_i=\Clt{\sigma}{M}{N_i}=M$, which is a contradiction.
\end{proof}

The following result is based in \cite[Theorem~1]{Jothilingam:2000}, see also \cite[Proposition~4]{Anderson/Dumitrescu:2002}.

\begin{proposition}\label{pr:140707}
Let $\sigma$ be a finite type hereditary torsion theory in $\rMod{A}$, and let $M$ be a totally $\sigma$--finitely generated module. The following statements are equivalent:
\begin{enumerate}[(a)]\sepa
\item
$M$ is totally $\sigma$--noetherian.
\item
For every prime ideal $\ideal{p}\in\mathcal{K}(\sigma)$ the submodule $M\ideal{p}\subseteq{M}$ is totally $\sigma$--finitely generated.
\end{enumerate}
\end{proposition}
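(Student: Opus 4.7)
The plan is to handle $(a)\Rightarrow(b)$ as immediate, since any submodule of a totally $\sigma$-noetherian module is totally $\sigma$-finitely generated, and to prove $(b)\Rightarrow(a)$ by contradiction in the spirit of Cohen and Anderson--Dumitrescu. Assume $M$ is not totally $\sigma$-noetherian and let $\mathcal{S}$ denote the set of submodules of $M$ that are not totally $\sigma$-finitely generated. I would first check that Zorn's lemma applies: for a chain $\{N_i\}\subseteq\mathcal{S}$, if its union $N=\bigcup_iN_i$ were totally $\sigma$-finitely generated via $N\ideal{a}\subseteq N'=\langle n_1',\ldots,n_k'\rangle$ for some $\ideal{a}\in\mathcal{L}(\sigma)$, choosing an $N_{i_0}$ in the chain containing every $n_j'$ would yield $N_{i_0}\ideal{a}\subseteq N\ideal{a}\subseteq N'\subseteq N_{i_0}$, contradicting $N_{i_0}\in\mathcal{S}$. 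Zorn then produces a maximal $N\in\mathcal{S}$, and the two propositions proved earlier in this section give that $\ideal{p}:=(N:M)$ is prime and that $N$ is a prime submodule of $M$.

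Next I would rule out $\ideal{p}\in\mathcal{Z}(\sigma)$. Since $\sigma$ is of finite type and $M$ is totally $\sigma$-finitely generated, fix finitely generated $H=\langle h_1,\ldots,h_s\rangle\subseteq M$ and $\ideal{h}\in\mathcal{L}(\sigma)$ with $M\ideal{h}\subseteq H$. If $\ideal{p}\in\mathcal{L}(\sigma)$, pick a finitely generated $\ideal{q}\in\mathcal{L}(\sigma)$ with $\ideal{q}\subseteq\ideal{p}$; then
\[
N\ideal{h}\ideal{q}\subseteq M\ideal{h}\ideal{q}\subseteq H\ideal{q}\subseteq M\ideal{p}\subseteq N,
\]
with $H\ideal{q}$ finitely generated and $\ideal{h}\ideal{q}\in\mathcal{L}(\sigma)$, so $N$ is totally $\sigma$-finitely generated, contradicting $N\in\mathcal{S}$. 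Hence $\ideal{p}\in\mathcal{K}(\sigma)$, and hypothesis (b) supplies a finitely generated $K_0\subseteq M\ideal{p}$ together with a finitely generated $\ideal{k}\in\mathcal{L}(\sigma)$ satisfying $(M\ideal{p})\ideal{k}\subseteq K_0$.

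The delicate step is closing the case $\ideal{p}\in\mathcal{K}(\sigma)$. Because $\ideal{h}\in\mathcal{L}(\sigma)$ while $\ideal{p}\notin\mathcal{L}(\sigma)$, the inclusion $\ideal{h}\subseteq(N:M)=\ideal{p}$ cannot hold, so $H\not\subseteq N$ and some generator $h=h_j$ of $H$ sits outside $N$. Since $N$ is a prime submodule and $h\ideal{p}\subseteq M\ideal{p}\subseteq N$ with $h\notin N$, I would conclude $(N:h)=\ideal{p}$ exactly. The strictly larger submodule $N+hA$ is totally $\sigma$-finitely generated, so there exist a finitely generated $\ideal{g}\in\mathcal{L}(\sigma)$ and $F=\langle n_1+hb_1,\ldots,n_r+hb_r\rangle\subseteq N+hA$ (with $n_i\in N$, $b_i\in A$) such that $(N+hA)\ideal{g}\subseteq F$. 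For $n\in N$ and $g\in\ideal{g}$, writing $ng=\sum_i(n_i+hb_i)a_i$ and using $ng\in N$ forces $\sum_ib_ia_i\in(N:h)=\ideal{p}$, hence
\[
N\ideal{g}\subseteq\langle n_1,\ldots,n_r\rangle+h\ideal{p}.
\]
Multiplying by $\ideal{k}$ and using $h\ideal{p}\ideal{k}\subseteq(M\ideal{p})\ideal{k}\subseteq K_0$ yields
\[
N\ideal{g}\ideal{k}\subseteq\langle n_1,\ldots,n_r\rangle+K_0\subseteq N,
\]
which exhibits $N$ as totally $\sigma$-finitely generated, the final contradiction. The main obstacle is this last step: one must spot the right element $h$ (a generator of $H$ lying outside $N$) and extract from the primeness of $N$ the tight equality $(N:h)=\ideal{p}$, which is precisely what converts the a priori unwieldy multiplication by $h\ideal{p}$ into a multiplication by the finitely generated $K_0$ provided by hypothesis~(b).
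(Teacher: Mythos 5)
Your proof is correct and follows essentially the same strategy as the paper: Zorn's lemma produces a maximal non--totally $\sigma$--finitely generated submodule $N$, one rules out $(N:M)\in\mathcal{Z}(\sigma)$, one locates an element $h\in M\setminus N$ with $(N:h)=(N:M)=\ideal{p}$, and then the computation $N\ideal{g}\subseteq\langle n_1,\ldots,n_r\rangle+h\ideal{p}$ followed by multiplication by $\ideal{k}$ closes the contradiction. The only variation is in how $h$ is produced: you appeal to the earlier prime--submodule proposition to get $(N:h)=\ideal{p}$ directly for any $h\notin N$ (noting $H\not\subseteq N$ to exhibit such an $h$), whereas the paper rederives this on the spot by writing $\ideal{p}=(N:a_1)\cap\cdots\cap(N:a_t)$ over generators of a finitely generated module trapping $M\ideal{h}$ and using primality of $\ideal{p}$ to single out one factor $(N:a_i)=\ideal{p}$; your route is a bit more modular, but the final estimate is identical.
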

\begin{proof}
Clearly, if $\ideal{p}\in\mathcal{Z}(\sigma)$, then $M\ideal{p}\subseteq{M}$ is $\sigma$--dense, hence totally $\sigma$--finitely generated, because $M$ is.
\par
(a) $\Rightarrow$ (b). %
It is obvious.
\par
(b) $\Rightarrow$ (a). %
Since $M$ is totally $\sigma$--finitely generated, there exists $\ideal{h}\in\mathcal{L}(\sigma)$, finitely generated, such that $\ideal{h}M\subseteq(a_1,\ldots,a_t)\subseteq{M}$. If $M$ is not totally $\sigma$--noetherian, $$
\Gamma=\{N\subseteq{M}\mid\;N\mbox{ is not totally $\sigma$--finitely generated}\}
$$
is not empty. Any chain in $\Gamma$ has a upper bound in $\Gamma$, hence, by Zorn's lemma, there exists $N\in\Gamma$ maximal. The ideal $\ideal{p}=(N:M)$ is prime; if $\ideal{p}\in\mathcal{Z}(\sigma)$, then $N$ is totally $\sigma$--finitely generated as $M$ is. Therefore, $\ideal{p}\in\mathcal{K}(\sigma)$, and, by the hypothesis, $M\ideal{p}$ is totally $\sigma$--finitely generated, there exists $\ideal{h}'\in\mathcal{L}(\sigma)$ such that $M\ideal{p}\ideal{h}'\subseteq(b_1,\ldots,b_s)\subseteq{M}\ideal{p}$.
\par
Since $\ideal{p}=(N:M)\subseteq(N:(a_1,\ldots,a_t))\subseteq(N:M\ideal{h})=(\ideal{p}:\ideal{h})=\ideal{p}$, as $\ideal{p}$ is prime, the $\ideal{p}=(N:a_1)\cap\ldots\cap(N:a_t)$, hence there exists an index $i$ such that $\ideal{p}=(N:a_i)$, and $a_i\notin{N}$. Therefore $N+A a_i\supsetneqq{N}$, and $N+A a_i$ is totally $\sigma$--finitely generated. There exists $\ideal{h}''\in\mathcal{L}(\sigma)$ such that $N\ideal{h}''\subseteq(n_1+x_1 a_i,\ldots,n_r+x_r a_i)\subseteq{N+Aa_i}$, where $x_1,\ldots,x_r\in{A}$, and $N\ideal{h}''\subseteq(n_1,\ldots,n_r)+a_i\ideal{p}$.
Then $N\ideal{h}''\ideal{h}'\subseteq(n_1,\ldots,n_r)\ideal{h}'+(b_1,\ldots,b_s)\subseteq{N+M\ideal{p}}\subseteq{N}$, which is a contradiction.
\end{proof}

As a direct consequence we have:

\begin{corollary}[Cohen's theorem]
Let $\sigma$ be a finite type hereditary torsion theory in $\rMod{A}$, the following statements are
equivalent:
\begin{enumerate}[(a)]\sepa
\item
$A$ is totally $\sigma$--noetherian.
\item
Every prime ideal in $\mathcal{K}(\sigma)$ is totally $\sigma$--finitely generated.
\end{enumerate}
\end{corollary}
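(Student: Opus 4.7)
The plan is to deduce this as a direct application of Proposition~\ref{pr:140707} to the regular module $M = A$, since an $A$-module structure of $A$ on itself identifies submodules with ideals and $A\ideal{p}$ with $\ideal{p}$ itself.

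First I would verify the trivial direction (a) $\Rightarrow$ (b): if $A$ is totally $\sigma$--noetherian, then every ideal (in particular every prime ideal in $\mathcal{K}(\sigma)$) is totally $\sigma$--finitely generated, so (b) holds with no further work.

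For (b) $\Rightarrow$ (a), I would take $M = A$ in Proposition~\ref{pr:140707}. The module $A$ is cyclic, generated by $1$, so it is finitely generated and therefore totally $\sigma$--finitely generated, which is the hypothesis required to apply that proposition. Under this identification, for any prime $\ideal{p} \subseteq A$ the submodule $M\ideal{p}$ is just the ideal $\ideal{p}$. Hence condition (b) of the Corollary (every prime in $\mathcal{K}(\sigma)$ is totally $\sigma$--finitely generated) is precisely condition (b) of Proposition~\ref{pr:140707} applied to $M = A$. The conclusion that $A$ is totally $\sigma$--noetherian as an $A$--module is the same as saying that the ring $A$ is totally $\sigma$--noetherian, since submodules of $A$ are exactly ideals of $A$.

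There is no real obstacle: the whole content of Cohen's theorem in this relative setting has already been absorbed into Proposition~\ref{pr:140707}, and passing from modules to the ring case is just a matter of unwinding definitions. The only thing to check carefully is that the case $\ideal{p} \in \mathcal{Z}(\sigma)$ need not be imposed, because such primes are automatically in $\mathcal{L}(\sigma)$ and hence totally $\sigma$--finitely generated (already noted at the start of the proof of Proposition~\ref{pr:140707}).
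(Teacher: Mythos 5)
Your proof is exactly the paper's approach: the paper dispatches the corollary in one line as a direct consequence of Proposition~\eqref{pr:140707}, and you have simply unwound the specialization $M=A$ (noting $A$ is cyclic hence totally $\sigma$--finitely generated, and $A\ideal{p}=\ideal{p}$). Correct and same route.
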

\begin{proof}
It is a direct consequence of Proposition~\eqref{pr:140707}.
\end{proof}

When we particularize to the hereditary torsion theory $\sigma=0$, i.e, when $\mathcal{L}(\sigma)=\{A\}$, we have that $A$ is a noetherian ring if, and only if, every prime ideal is finitely generated, which is Cohen's Theorem. Otherwise, if $\sigma=\sigma_S$, for some multiplicatively closed subset $S\subseteq{A}$, then $A$ is $S$--noetherian if, and only if, every prime ideal, in $\mathcal{K}(\sigma_S)$, is totally $S$--finite, see \cite{Anderson/Dumitrescu:2002}.

\section{Maximal conditions}

Let $M$ be an $A$--module, an increasing chain of submodules $N_1\subseteq{N_2}\subseteq\cdots$ is \textbf{totally $\sigma$--stable} whenever there exist an index $m$ and $\ideal{h}\in\mathcal{L}(\sigma)$ such that $N_m\ideal{h}\subseteq{N_s}$ for every $s\geq{m}$.

\begin{proposition}
Let $\sigma$ be a hereditary torsion theory and $M$ be an $A$--module. The following statements are equivalent:
\begin{enumerate}[(a)]\sepa
\item
$M$ is totally $\sigma$--noetherian.
\item
Every increasing chain $N_1\subseteq{N_2}\subseteq\cdots$ is totally $\sigma$--stable.
\end{enumerate}
\end{proposition}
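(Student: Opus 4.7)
The plan is to establish the equivalence in Noetherian style: the union of the chain handles (a)$\Rightarrow$(b), and a Zorn argument inside the poset of totally $\sigma$--finitely generated submodules handles (b)$\Rightarrow$(a). Throughout I take the stability condition to mean $N_s\ideal{h}\subseteq N_m$ for every $s\geq m$, so a totally $\sigma$--stable chain is one that becomes trapped in some $N_m$ after multiplication by a member of $\mathcal{L}(\sigma)$.

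For (a)$\Rightarrow$(b): set $N=\bigcup_k N_k$, which by (a) is totally $\sigma$--finitely generated; so there are a finitely generated $H=\langle y_1,\ldots,y_r\rangle\subseteq N$ and $\ideal{h}\in\mathcal{L}(\sigma)$ with $N\ideal{h}\subseteq H$. Each $y_i$ lies in some $N_{k_i}$, and taking $m=\max_i k_i$ puts $H\subseteq N_m$, whence $N_s\ideal{h}\subseteq N\ideal{h}\subseteq H\subseteq N_m$ for all $s\geq m$.

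For (b)$\Rightarrow$(a): fix a submodule $N\subseteq M$ and let $\mathcal{S}$ be the poset of totally $\sigma$--finitely generated submodules of $N$, ordered by inclusion. First I show $\mathcal{S}$ is closed under chains: for a chain $(L_i)$ in $\mathcal{S}$, property (b) produces $\ideal{h}\in\mathcal{L}(\sigma)$ and an index $m$ with $L_s\ideal{h}\subseteq L_m$ for $s\geq m$; combining with a witness $L_m\ideal{h}'\subseteq H$ for $L_m$ (where $H$ is finitely generated) gives $(\bigcup_s L_s)\ideal{h}\ideal{h}'\subseteq H$ and $\ideal{h}\ideal{h}'\in\mathcal{L}(\sigma)$, so the union is in $\mathcal{S}$. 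Zorn's lemma then yields a maximal element $L_0\in\mathcal{S}$, which I claim equals $N$: otherwise, choosing $x\in N\setminus L_0$ and a witness $L_0\ideal{h}\subseteq H$, one has $(L_0+Ax)\ideal{h}\subseteq H+Ax$, still finitely generated, so $L_0+Ax\in\mathcal{S}$ strictly extends $L_0$, contradicting maximality.

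The delicate step I expect to be the main obstacle is precisely the closure of $\mathcal{S}$ under directed unions: the hypothesis (b) is stated for a sequence $N_1\subseteq N_2\subseteq\cdots$, while the Zorn argument requires upper bounds for chains of arbitrary order type. Under the natural reading that (b) applies to any well-ordered ascending chain the argument above goes through cleanly; if instead one insists on countable chains only, one needs to supplement Zorn with a transfinite exhaustion, iterating the successor step $L\mapsto L+Ax$ to handle submodules of $N$ that are not countably generated.
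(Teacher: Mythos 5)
Your proof takes essentially the same route as the paper: form the union $N=\bigcup_k N_k$ and pull finitely many generators into some $N_m$ for (a)$\Rightarrow$(b), then for (b)$\Rightarrow$(a) apply Zorn's lemma to the poset of totally $\sigma$--finitely generated submodules of a fixed $N$ and observe that a maximal element must equal $N$ because the successor step $L\mapsto L+Ax$ stays in the poset. You also spell out the composition $\ideal{h}\ideal{h}'$ (witness for the chain combined with witness for $N_m$) to get the union back into the poset, a small point the paper's write-up compresses.

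Two remarks worth recording. First, the paper's displayed definition of ``totally $\sigma$--stable'' reads $N_m\ideal{h}\subseteq N_s$, which is vacuous for an increasing chain; you have silently used the intended (and correct) inequality $N_s\ideal{h}\subseteq N_m$, which is consistent with the way the paper itself later uses the notion. Second, your worry about countable versus arbitrary chains is a genuine imprecision that the paper glosses over as well: condition (b) as literally stated quantifies only over $\mathbb{N}$--indexed chains, whereas Zorn's lemma needs upper bounds for arbitrary (equivalently, well-ordered) chains. Reading ``increasing chain'' as ``well-ordered ascending chain'' closes the gap cleanly, as you note, since Zorn only requires upper bounds for well-ordered chains; under the strictly countable reading, your proposed transfinite exhaustion by itself does not finish the job, because it again needs the union of an uncountable well-ordered family to be totally $\sigma$--finitely generated. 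Both you and the paper are therefore implicitly working with the broader reading, and it would be worth stating that explicitly.
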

\begin{proof}
(a) $\Rightarrow$ (b). %
Let $N_1\subseteq{N_2}\subseteq\cdots$ be an increasing chain of submodules of $M$, and define $N=\cup_{s\geq1}N_s$. By the hypothesis, there exist $\ideal{h}\subseteq\mathcal{L}(\sigma)$ and $n_1,\ldots,n_t\in{N}$ such that $N\ideal{h}\subseteq(n_1,\ldots,n_t)\subseteq{N}$. Therefore, there exists an index $m$ such that $n_1,\ldots,n_t\in{N_m}$, and we have $N\ideal{h}\subseteq{N_m}$. In particular, $N_s\ideal{h}\subseteq{N_m}$ for every $s\geq{m}$.
\par
(b) $\Rightarrow$ (a). %
Let $N\subseteq{M}$, for any increasing chain $\{N_s\}_{s\in\mathbb{N}}$ of totally $\sigma$--finitely generated submodules of $N$ there exist an index $m$ and $\ideal{h}\in\mathcal{L}(\sigma)$ such that $N_s\ideal{h}\subseteq{N_m}$, for any $s\geq{m}$. In particular, $N\ideal{h}\subseteq{N_m}$, and $N$ is totally $\sigma$--finitely generated. This means that the family of all totally $\sigma$--finitely generated submodules of $N$ has maximal elements. Let $H\subseteq{N}$ one of these maximal elements. If $H\subsetneqq{N}$, there exists $n\in{N\setminus{H}}$, hence $H+nA\subseteq{N}$ is totally finitely generated, which is a contradiction.
\end{proof}

Let $M$ be an $A$--module, we have the following definitions:
\begin{enumerate}[(1)]\sepa
\item
Let $\mathcal{N}\subseteq\mathcal{L}(M)$ be a family of submodules of $M$. An element $N\in\mathcal{N}$ is \textbf{$\sigma$--maximal} if there exists $\ideal{h}\in\mathcal{L}(\sigma)$ such that for every $H\in\mathcal{N}$ satisfying $N\subseteq{H}$ we have $H\ideal{h}\subseteq{N}$.
\item
The $A$--module $M$ satisfies the \textbf{$\sigma$-MAX condition} if every nonempty family of submodules of $M$ has $\sigma$--maximal elements.
\item
A family $\mathcal{N}$ of submodules of $M$ is \textbf{$\sigma$--upper closed} if for every submodule $H\subseteq{M}$ such that there exist $N\in\mathcal{N}$ and $\ideal{h}\in\mathcal{L}(\sigma)$ satisfying $H$ and $H\ideal{h}\subseteq{N}$, or equivalently $H\subseteq(N:\ideal{h})$, we have $H\in\mathcal{N}$.
\end{enumerate}

\begin{proposition}
Let $M$ be an $A$--module, the following statements are equivalent:
\begin{enumerate}[(a)]\sepa
\item
$M$ is totally $\sigma$--noetherian.
\item
Every nonempty $\sigma$--upper closed family of submodules of $M$ has maximal elements.
\item
Every nonempty family of submodules of $M$ has $\sigma$--maximal elements.
\end{enumerate}
\end{proposition}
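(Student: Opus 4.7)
The plan is to establish the cycle $(a)\Rightarrow(b)\Rightarrow(c)\Rightarrow(a)$.

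For $(a)\Rightarrow(b)$, let $\mathcal{N}$ be a nonempty $\sigma$--upper closed family. I would apply Zorn's lemma, so the task reduces to exhibiting an upper bound in $\mathcal{N}$ for every chain $\{N_i\}_i$ in $\mathcal{N}$. The union $N=\bigcup_i N_i$ is a submodule of $M$ and, by $(a)$, is totally $\sigma$--finitely generated: there exist $x_1,\ldots,x_n\in N$ and $\ideal{h}\in\mathcal{L}(\sigma)$ with $N\ideal{h}\subseteq\langle x_1,\ldots,x_n\rangle$. Since the chain is totally ordered, all the $x_j$ lie in a single $N_{i^*}$, hence $N\ideal{h}\subseteq N_{i^*}\in\mathcal{N}$, and $\sigma$--upper closedness forces $N\in\mathcal{N}$, which is the required upper bound.

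For $(b)\Rightarrow(c)$, given a nonempty family $\mathcal{N}$, I would form its $\sigma$--upper closure
\[
\overline{\mathcal{N}}=\{H\subseteq M\mid\; H\ideal{h}\subseteq N \text{ for some } N\in\mathcal{N},\,\ideal{h}\in\mathcal{L}(\sigma)\}.
\]
This family is $\sigma$--upper closed: if $K\ideal{h}'\subseteq H$ with $H\ideal{h}\subseteq N$, then $K\ideal{h}'\ideal{h}\subseteq N$, and $\ideal{h}'\ideal{h}\in\mathcal{L}(\sigma)$ since Gabriel filters are closed under products. By $(b)$, let $H^*$ be maximal in $\overline{\mathcal{N}}$, witnessed by some $N^*\in\mathcal{N}$ and $\ideal{h}^*\in\mathcal{L}(\sigma)$ with $H^*\ideal{h}^*\subseteq N^*$. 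I then claim $N^*$ is $\sigma$--maximal in $\mathcal{N}$ with witness $\ideal{h}^*$: for any $H\in\mathcal{N}$ containing $N^*$, the sum $H^*+H$ satisfies $(H^*+H)\ideal{h}^*\subseteq N^*+H=H$, placing $H^*+H$ in $\overline{\mathcal{N}}$; maximality of $H^*$ then forces $H^*+H=H^*$, so $H\subseteq H^*$ and hence $H\ideal{h}^*\subseteq H^*\ideal{h}^*\subseteq N^*$.

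For $(c)\Rightarrow(a)$, let $N\subseteq M$ be arbitrary and consider the family $\mathcal{F}$ of totally $\sigma$--finitely generated submodules of $N$, which contains $0$. By $(c)$, $\mathcal{F}$ admits a $\sigma$--maximal element $K^*$ with witness $\ideal{h}\in\mathcal{L}(\sigma)$. For each $n\in N$, the submodule $K^*+nA$ still lies in $\mathcal{F}$ (adjoining a single generator preserves total $\sigma$--finite generation) and contains $K^*$, so $\sigma$--maximality yields $(K^*+nA)\ideal{h}\subseteq K^*$, hence $n\ideal{h}\subseteq K^*$. Varying $n$ gives $N\ideal{h}\subseteq K^*$, so $N$ is totally $\sigma$--finitely generated. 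The delicate step is $(b)\Rightarrow(c)$, because the definition of $\sigma$--maximal demands a single ideal $\ideal{h}^*$ that witnesses the property uniformly for every $H\in\mathcal{N}$ above $N^*$; passing to the $\sigma$--upper closure together with the $H^*+H$ trick is precisely what manufactures such a uniform witness from the purely maximal one supplied by $(b)$.
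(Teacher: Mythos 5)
Your proof is correct, and the overall skeleton---the cycle $(a)\Rightarrow(b)\Rightarrow(c)\Rightarrow(a)$ driven by the $\sigma$--upper closure $\overline{\mathcal{N}}$---matches the paper's, but two of the three legs are argued differently. In $(b)\Rightarrow(c)$ the paper pins down the maximal element of $\overline{\mathcal{N}}$ as $H=(N:\ideal{h})$ (using that $(N:\ideal{h})\in\overline{\mathcal{N}}$) and then, for $N\subseteq L$ in $\mathcal{N}$, compares $(N:\ideal{h})\subseteq(L:\ideal{h})$ and uses maximality to force equality; you instead run the sum trick $(H^*+H)\ideal{h}^*\subseteq N^*+H=H$ to keep $H^*+H$ inside $\overline{\mathcal{N}}$ and squeeze $H\subseteq H^*$, which is a nice, equally valid way to manufacture the uniform witness. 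In $(c)\Rightarrow(a)$ the paper routes through the earlier proposition characterizing total $\sigma$--noetherianness via totally $\sigma$--stable chains (take an increasing chain, view it as a family, extract a $\sigma$--maximal $N_m$), whereas you argue self-containedly: for an arbitrary $N\subseteq M$ you take the (nonempty, as $0$ belongs) family $\mathcal{F}$ of totally $\sigma$--finitely generated submodules of $N$, pick a $\sigma$--maximal $K^*$ with witness $\ideal{h}$, and use $K^*+nA\in\mathcal{F}$ to get $N\ideal{h}\subseteq K^*$. The one point you compress is the final step: from $N\ideal{h}\subseteq K^*$ with $K^*$ merely \emph{totally} $\sigma$--finitely generated you should still unwind $K^*\ideal{h}'\subseteq K'\subseteq K^*$ with $K'$ finitely generated, giving $N\ideal{h}\ideal{h}'\subseteq K'\subseteq N$ and $\ideal{h}\ideal{h}'\in\mathcal{L}(\sigma)$; that closes the argument. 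Your $(c)\Rightarrow(a)$ has the advantage of not leaning on the chain proposition, while the paper's is shorter once that proposition is available.
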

\begin{proof}
(a) $\Rightarrow$ (b). %
Let $\mathcal{N}$ be a nonempty $\sigma$--upper closed family of submodules of $M$. For any increasing chain $N_1\subseteq{N_2}\subseteq\cdots$ in $\mathcal{N}$ we define $N=\cup_{s\geq1}N_s$. By the hypothesis there exist an index $m$ and $\ideal{h}\in\mathcal{L}(\sigma)$ such that $N_s\ideal{h}\subseteq{N_m}$, for every $s\geq{m}$. Hence $N\ideal{h}\subseteq{N_m}$, and $N\in\mathcal{N}$. In consequence, by Zorn's lemma, $\mathcal{N}$ contains maximal elements.
\par
(b) $\Rightarrow$ (c). %
Let $\mathcal{N}$ be a nonempty family of submodules of $M$. We define a new family
$$
\overline{\mathcal{N}}=\{H\subseteq{M}\mid\;\textrm{there exist }N\in\mathcal{N}\textrm{ and }\ideal{h}\in\mathcal{L}(\sigma)\textrm{ such that }H\ideal{h}\subseteq{N}\},
$$
the \textbf{$\sigma$--upper closure} of $\mathcal{N}$.
We claim $\overline{\mathcal{N}}$ is $\sigma$--upper closed. Indeed, if $L\subseteq{M}$, $H\in\overline{\mathcal{N}}$ and $\ideal{h}\in\mathcal{L}(\sigma)$ satisfy $L\subseteq(H:\ideal{h})$, by the hypothesis there exist $N\in\mathcal{N}$ and $\ideal{h}'\in\mathcal{L}(\sigma)$ such that $H\subseteq(N:\ideal{h}')$, hence we have
$L\subseteq(H:\ideal{h})
\subseteq((N:\ideal{h}'):\ideal{h})
=(N:\ideal{h}'\ideal{h})$,
and $L\in\overline{\mathcal{N}}$.
\par
By the hypothesis, there exists a maximal element, say $H$, in $\overline{\mathcal{N}}$, and there exist $N\in\mathcal{N}$ and $\ideal{h}\in\mathcal{L}(\sigma)$ such that $N\subseteq{H}\subseteq(N:\ideal{h})$. Since $(N:\ideal{h})\in\overline{\mathcal{N}}$, then $H=(N:\ideal{h})$. We claim $N$ is $\sigma$--maximal in $\mathcal{N}$. Indeed, if $N\subseteq{L}$ for some $L\in\mathcal{N}$, then $H=(N:\ideal{h})\subseteq(L:\ideal{h})$, by the maximality of $H$ we have $(L:\ideal{h})=(N:\ideal{h})$, hence $L\ideal{h}\subseteq{N}$.
\par
(c) $\Rightarrow$ (a). %
Let $N_1\subseteq{N_2}\subseteq\cdots$ be an increasing chain of submodules of $M$. We consider the family $\mathcal{N}=\{N_s\mid\;s\in\mathbb{N}\setminus\{0\}\}$. By the hypothesis $\mathcal{N}$ has $\sigma$--maximal elements. If $N_m\in\mathcal{N}$ is $\sigma$--maximal, there exists $\ideal{h}\in\mathcal{L}(\sigma)$ such that $N_s\ideal{h}\subseteq{N_m}$ for every $s\geq{m}$; hence $M$ is totally $\sigma$--noetherian.
\end{proof}

Observe that if $M$ is an $A$--module, for any submodule $N\subseteq{M}$ we may consider the family $\mathcal{N}=\{N\}$, and its \textbf{$\sigma$--upper closure}
$$
\overline{\mathcal{N}}=\{H\subseteq{M}\mid\;\textrm{there exists }\ideal{h}\in\mathcal{L}(\sigma)\textrm{ such that }N\subseteq{H}\subseteq(N:\ideal{h})\},
$$
hence for every $H\in\overline{\mathcal{N}}$ we have $N\subseteq{H}\subseteq\Clt{\sigma}{M}{N}$. In addition, we have:

\begin{lemma}
$\overline{\mathcal{N}}$ has only one maximal element if, and only if, $\Clt{\sigma}{M}{N}\in\overline{\mathcal{N}}$.
\end{lemma}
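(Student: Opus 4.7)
The plan is to exploit two simple facts about $\overline{\mathcal{N}}$: first, that every element of $\overline{\mathcal{N}}$ is sandwiched between $N$ and $\Clt{\sigma}{M}{N}$, which is the observation the author makes immediately before stating the lemma; second, that $\overline{\mathcal{N}}$ is closed under finite sums. The second fact is routine: if $H_1,H_2\in\overline{\mathcal{N}}$ with $H_i\ideal{h}_i\subseteq N$ for some $\ideal{h}_i\in\mathcal{L}(\sigma)$, then since Gabriel filters are closed under products of ideals, $\ideal{h}_1\ideal{h}_2\in\mathcal{L}(\sigma)$ and $(H_1+H_2)\ideal{h}_1\ideal{h}_2\subseteq N$, so $H_1+H_2\in\overline{\mathcal{N}}$.

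For the easy direction, I would argue that if $\Clt{\sigma}{M}{N}\in\overline{\mathcal{N}}$, the first fact makes it the greatest element of $\overline{\mathcal{N}}$, hence trivially the unique maximal element.

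For the other direction, assume $L$ is the unique maximal element of $\overline{\mathcal{N}}$. I would first upgrade ``maximal'' to ``maximum'': for any $H\in\overline{\mathcal{N}}$, closure under sums gives $H+L\in\overline{\mathcal{N}}$, and $L\subseteq H+L$; maximality of $L$ forces $H+L=L$, so $H\subseteq L$. Thus $L$ contains every element of $\overline{\mathcal{N}}$. Now I would show $\Clt{\sigma}{M}{N}\subseteq L$ element by element: given $m\in\Clt{\sigma}{M}{N}$, the characterization $\Clt{\sigma}{M}{N}/N=\sigma(M/N)$ yields $\ideal{h}_m\in\mathcal{L}(\sigma)$ with $m\ideal{h}_m\subseteq N$, hence $(N+mA)\ideal{h}_m\subseteq N$ and $N+mA\in\overline{\mathcal{N}}$, so $m\in N+mA\subseteq L$. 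Combined with the reverse inclusion $L\subseteq\Clt{\sigma}{M}{N}$ from fact (i), this gives $L=\Clt{\sigma}{M}{N}\in\overline{\mathcal{N}}$.

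There is no serious obstacle here — in particular, the argument does not require Zorn's lemma on $\overline{\mathcal{N}}$, only the hypothesised existence of a unique maximal element together with the sum-closure of $\overline{\mathcal{N}}$. The only point to be careful about is to use the unique maximal element assumption via the sum trick to turn it into a maximum, rather than trying to reason directly from maximality.
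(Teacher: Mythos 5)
Your proof is correct and follows essentially the same route as the paper's: the paper derives a contradiction by taking $x\in\Clt{\sigma}{M}{N}\setminus H$ (where $H$ is the maximal element) and observing $(H+(x))\ideal{h}\subseteq N$ for a suitable $\ideal{h}\in\mathcal{L}(\sigma)$, which is precisely your sum-closure argument specialized to adding one cyclic submodule at a time. A small remark on your closing paragraph: neither your argument nor the paper's actually uses the \emph{uniqueness} of the maximal element, only its existence --- sum-closure already forces any maximal element of $\overline{\mathcal{N}}$ to be a maximum, so uniqueness is an automatic consequence rather than a needed hypothesis.
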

\begin{proof}
Let $H\in\overline{\mathcal{N}}$ be the only maximal element, if there exists $x\in\Clt{\sigma}{M}{N}\setminus{H}$, there are $\ideal{h}\in\mathcal{L}(\sigma)$ such that $H\ideal{h}\subseteq{N}$ and $x\ideal{h}\subseteq{N}$, hence $(H+(x))\ideal{h}\subseteq{N}$, and $H+(x)\in\overline{\mathcal{N}}$, which is a contradiction.
\end{proof}

\begin{lemma}\label{le:20191204b}
Let $M$ be an $A$--module and $T\subseteq{M}$ be a totally $\sigma$--torsion submodule, the following statements are equivalent:
\begin{enumerate}[(a)]\sepa
\item
$M$ is totally $\sigma$--noetherian.
\item
$M/T$ is totally $\sigma$--noetherian.
\end{enumerate}
\end{lemma}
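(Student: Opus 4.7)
The plan is to reduce this immediately to the first proposition of the section, which says that $M$ is totally $\sigma$--noetherian if and only if both $N$ and $M/N$ are, for any submodule $N \subseteq M$. Applied with $N = T$, the lemma will follow as soon as we verify that $T$ itself is automatically totally $\sigma$--noetherian under the hypothesis that it is totally $\sigma$--torsion.

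To do this, I would unwind the definition: totally $\sigma$--torsion means there exists $\ideal{k}\in\mathcal{L}(\sigma)$ with $T\ideal{k}=0$. Then for any submodule $N\subseteq T$, the same $\ideal{k}$ witnesses $N\ideal{k}=0\subseteq{0}$, where $0$ is trivially finitely generated, so $N$ is totally $\sigma$--finitely generated. Hence every submodule of $T$ is totally $\sigma$--finitely generated, i.e., $T$ is totally $\sigma$--noetherian. (This is the observation already alluded to in the excerpt just before Proposition~\ref{pr:140706}, namely that every totally $\sigma$--torsion module is totally $\sigma$--noetherian.)

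With that established, the proof splits into the two obvious implications. For (a) $\Rightarrow$ (b), $M/T$ is a quotient of a totally $\sigma$--noetherian module, hence totally $\sigma$--noetherian by part~(2) of the first proposition of the section. For (b) $\Rightarrow$ (a), we combine the fact that $T$ is totally $\sigma$--noetherian (just shown) with the hypothesis that $M/T$ is totally $\sigma$--noetherian, and again apply part~(2) of that proposition to conclude that $M$ is totally $\sigma$--noetherian.

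There is no real obstacle here: the only substantive point is the initial observation that a totally $\sigma$--torsion module is totally $\sigma$--noetherian, and this is immediate from the definition because the annihilating ideal $\ideal{k}\in\mathcal{L}(\sigma)$ simultaneously witnesses that every submodule is totally $\sigma$--finitely generated (by the zero submodule). Everything else is a direct appeal to previously established results.
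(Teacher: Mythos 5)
Your proposal is correct, but it takes a genuinely different route from the paper. You reduce the lemma to part (2) of the first proposition in Section~1 (the two-out-of-three statement for a submodule $N\subseteq M$: $M$ is totally $\sigma$--noetherian iff $N$ and $M/N$ both are), combined with the observation that a totally $\sigma$--torsion module is automatically totally $\sigma$--noetherian (an ideal $\ideal{k}\in\mathcal{L}(\sigma)$ with $T\ideal{k}=0$ witnesses that every submodule of $T$ is totally $\sigma$--finitely generated by the zero submodule); the paper even states this last fact explicitly just after the definitions, so the reduction is clean. The paper instead gives a direct argument within Section~3, using the characterization of totally $\sigma$--noetherian via totally $\sigma$--stable increasing chains: for (a) $\Rightarrow$ (b) it lifts a chain in $M/T$ to a chain in $M$ and pushes the stabilizing ideal $\ideal{h}$ back down; for (b) $\Rightarrow$ (a) it pushes a chain in $M$ down to $M/T$, obtains $\ideal{h}$ stabilizing the image, and then multiplies by a second ideal $\ideal{h}'$ with $T\ideal{h}'=0$ to absorb the $T$-ambiguity. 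Your route is shorter and more conceptual, essentially a one-line appeal to the short exact sequence $0\to T\to M\to M/T\to 0$; the paper's is self-contained within the section and makes visible exactly how the annihilator of $T$ enters. Both are correct, and yours is arguably the cleaner presentation given the earlier proposition.
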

\begin{proof}
(a) $\Rightarrow$ (b). %
Let $N_1/T\subseteq{N_2/T}\subseteq\cdots$ be an increasing chain of submodules of $M/T$, there exist $m\in\mathbb{N}$ and $\ideal{h}\in\mathcal{L}(\sigma)$ such that $N_s\ideal{h}\subseteq{N_m}$, for every $s\geq{m}$. Therefore
\[
\frac{N_s}{T}\ideal{h}=\frac{N_s\ideal{h}+T}{T}\subseteq\frac{N_m}{T}.
\]
\par
(b) $\Rightarrow$ (a). %
Let $N_1\subseteq{N_2}\subseteq\cdots$ be an increasing chain of submodules of $M$, then $(N_1+T)/T\subseteq(N_2+T)/T\subseteq\cdots$ is an increasing chain of submodules of $M/T$, and there exist $m\in\mathbb{N}$, $\ideal{h}\in\mathcal{L}(\sigma)$ such that $\frac{N_s+T}{T}\ideal{h}\subseteq\frac{N_m+T}{T}$, for every $s\geq{m}$. Otherwise, there exists $\ideal{h}'\in\mathcal{L}(\sigma)$ such that $T\ideal{h}'=0$. Therefore,
\[
N_s\ideal{h}\ideal{h}'=(N_s\ideal{h}+T)\ideal{h}'\subseteq(N_m+T)\ideal{h}'=N_m\ideal{h}'\subseteq{N_m}.
\]
\end{proof}

\section{Ring extensions}

Let $\sigma$ be a hereditary torsion theory in $\rMod{A}$, and $f:A\longrightarrow{B}$ be a ring map.

\begin{lemma}
The set $\mathcal{L}(f(\sigma))=\{\ideal{b}\subseteq{B}\mid\;f^{-1}(\ideal{b})\in\mathcal{L}(\sigma)\}$ is a Gabriel filter in $B$, and it defines a hereditary torsion theory in $\rMod{B}$, being
\begin{enumerate}[(1)]\sepa
\item
$\mathcal{T}_{f(\sigma)}=\{M_B\mid\;M_A\in\mathcal{T}_\sigma\}$ and
\item
$\mathcal{F}_{f(\sigma)}=\{M_B\mid\;M_A\in\mathcal{F}_\sigma\}$.
\end{enumerate}
\end{lemma}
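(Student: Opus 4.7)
The plan is to verify the Gabriel filter axioms for $\mathcal{L}(f(\sigma))$ directly from the definition, and then read off the torsion and torsionfree classes by applying the general formulas $\mathcal{T}_{f(\sigma)}=\{M_B\mid (0:_B m)\in\mathcal{L}(f(\sigma))\text{ for all }m\in M\}$ and $\mathcal{F}_{f(\sigma)}=\{M_B\mid f(\sigma)M=0\}$ recalled in the introduction. The workhorses will be the two set-theoretic identities
\[
f^{-1}(\ideal{b}_1\cap\ideal{b}_2)=f^{-1}(\ideal{b}_1)\cap f^{-1}(\ideal{b}_2)
\qquad\text{and}\qquad
f^{-1}\bigl((\ideal{b}':f(a))\bigr)=(f^{-1}(\ideal{b}'):a),
\]
where the second is checked by unwinding: $x$ lies in the left side iff $f(xa)=f(x)f(a)\in\ideal{b}'$, iff $xa\in f^{-1}(\ideal{b}')$.

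First I would establish the easy filter axioms. The set $\mathcal{L}(f(\sigma))$ is nonempty because $f^{-1}(B)=A\in\mathcal{L}(\sigma)$, it is upward closed because $f^{-1}$ preserves inclusions, and it is closed under finite intersections by the first identity above together with the corresponding property of $\mathcal{L}(\sigma)$.

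The heart of the argument is the transitivity axiom. Suppose $\ideal{b}'\subseteq B$ and that there exists $\ideal{b}\in\mathcal{L}(f(\sigma))$ with $(\ideal{b}':b)\in\mathcal{L}(f(\sigma))$ for every $b\in\ideal{b}$; I must show $f^{-1}(\ideal{b}')\in\mathcal{L}(\sigma)$. Set $\ideal{a}:=f^{-1}(\ideal{b})$, which is in $\mathcal{L}(\sigma)$ by hypothesis. For each $a\in\ideal{a}$ we have $f(a)\in\ideal{b}$, hence $(\ideal{b}':f(a))\in\mathcal{L}(f(\sigma))$, and applying the second identity above gives $(f^{-1}(\ideal{b}'):a)=f^{-1}((\ideal{b}':f(a)))\in\mathcal{L}(\sigma)$. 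The Gabriel filter axiom in $A$ then forces $f^{-1}(\ideal{b}')\in\mathcal{L}(\sigma)$, so $\ideal{b}'\in\mathcal{L}(f(\sigma))$, as required.

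Finally, for the descriptions of $\mathcal{T}_{f(\sigma)}$ and $\mathcal{F}_{f(\sigma)}$, the key point is that for a $B$-module $M$ and $m\in M$ we have $f^{-1}((0:_B m))=(0:_A m)$ under the induced $A$-module structure, since $a\cdot m=0$ in $M_A$ exactly when $f(a)\in(0:_B m)$. Therefore $(0:_B m)\in\mathcal{L}(f(\sigma))$ iff $(0:_A m)\in\mathcal{L}(\sigma)$, which immediately yields $f(\sigma)M=\sigma(M_A)$ (as subsets of $M$), whence both $\mathcal{T}_{f(\sigma)}=\{M_B\mid M_A\in\mathcal{T}_\sigma\}$ and $\mathcal{F}_{f(\sigma)}=\{M_B\mid M_A\in\mathcal{F}_\sigma\}$.

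The only genuinely non-trivial step is the transitivity verification; everything else is bookkeeping around the identity $f^{-1}((\ideal{b}':f(a)))=(f^{-1}(\ideal{b}'):a)$, so the main obstacle is to present that identity cleanly and ensure that one applies the Gabriel axiom in $A$ to the right pullback ideal.
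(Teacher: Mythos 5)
Your verification is correct and complete: the filter axioms are routine, the transitivity axiom follows from the identity $f^{-1}((\ideal{b}':f(a)))=(f^{-1}(\ideal{b}'):a)$ together with the Gabriel axiom in $A$, and the observation $(0:_A m)=f^{-1}((0:_B m))$ gives $f(\sigma)M=\sigma(M_A)$, whence the descriptions of $\mathcal{T}_{f(\sigma)}$ and $\mathcal{F}_{f(\sigma)}$. The paper states this lemma without proof, and your argument is the standard direct verification one would expect.
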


We name $f(\sigma)$ the hereditary torsion theory \textbf{induced} by $\sigma$ through the ring map $f$.

\begin{lemma}
Let $\sigma$ be a finite type hereditary torsion theory in $\rMod{A}$, and $f:A\longrightarrow{B}$ be a ring map, the induced hereditary torsion theory $f(\sigma)$ is of finite type.
\end{lemma}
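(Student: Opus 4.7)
The plan is to unwind the definition of \emph{finite type} (namely that $\mathcal{L}(\sigma)$ admits a filter basis of finitely generated ideals) and show that, given any $\ideal{b}\in\mathcal{L}(f(\sigma))$, one can produce a finitely generated ideal of $B$ sitting inside $\ideal{b}$ and still belonging to $\mathcal{L}(f(\sigma))$. The natural candidate is the extension $f(\ideal{a})B$ of a suitable finitely generated ideal $\ideal{a}$ of $A$, obtained from the finite type hypothesis on $\sigma$.

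Concretely, I would proceed as follows. Fix $\ideal{b}\in\mathcal{L}(f(\sigma))$; by definition of $\mathcal{L}(f(\sigma))$ we have $f^{-1}(\ideal{b})\in\mathcal{L}(\sigma)$. Since $\sigma$ is of finite type, there exists a finitely generated ideal $\ideal{a}\subseteq f^{-1}(\ideal{b})$ with $\ideal{a}\in\mathcal{L}(\sigma)$. Say $\ideal{a}=\langle a_1,\ldots,a_n\rangle$. Set $\ideal{b}':=f(\ideal{a})B=\langle f(a_1),\ldots,f(a_n)\rangle B$; this is a finitely generated ideal of $B$, and since each $f(a_i)\in\ideal{b}$, we have $\ideal{b}'\subseteq\ideal{b}$.

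It remains to verify that $\ideal{b}'\in\mathcal{L}(f(\sigma))$, i.e.\ that $f^{-1}(\ideal{b}')\in\mathcal{L}(\sigma)$. Since $f(\ideal{a})\subseteq\ideal{b}'$, we have $\ideal{a}\subseteq f^{-1}(\ideal{b}')$; but $\ideal{a}\in\mathcal{L}(\sigma)$ and Gabriel filters are closed under taking supersets, so $f^{-1}(\ideal{b}')\in\mathcal{L}(\sigma)$ as required. Thus $\{f(\ideal{a})B\mid\ideal{a}\in\mathcal{L}(\sigma),\,\ideal{a}\text{ finitely generated}\}$ is a filter basis of $\mathcal{L}(f(\sigma))$ made up of finitely generated ideals, proving that $f(\sigma)$ is of finite type.

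No step here looks delicate: the argument is a direct transport of a finitely generated witness from $A$ to $B$ via extension, and the only thing one needs is the elementary fact that $\ideal{a}\subseteq f^{-1}(f(\ideal{a})B)$. If there is a subtle point at all, it is keeping straight that being of finite type is phrased in terms of a filter basis (not of every element) so it suffices to exhibit one finitely generated ideal of $B$ inside each $\ideal{b}\in\mathcal{L}(f(\sigma))$, rather than showing $\ideal{b}$ itself is finitely generated.
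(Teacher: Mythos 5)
Your proposal is correct and takes essentially the same approach as the paper's proof: pull back $\ideal{b}$ to $f^{-1}(\ideal{b})\in\mathcal{L}(\sigma)$, use finite type to find a finitely generated $\ideal{a}\in\mathcal{L}(\sigma)$ inside it, and observe that $f(\ideal{a})B$ is a finitely generated member of $\mathcal{L}(f(\sigma))$ contained in $\ideal{b}$. You simply spell out more explicitly the verification that $f(\ideal{a})B\in\mathcal{L}(f(\sigma))$, which the paper leaves implicit.
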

\begin{proof}
Let $\ideal{b}\in\mathcal{L}(f(\sigma))$, there exists $\ideal{a}\in\mathcal{L}(\sigma)$, finitely generated, such that $\ideal{a}\subseteq{f^{-1}(\ideal{b})}$. Therefore, $f(\ideal{a})B\in\mathcal{L}(f(\sigma))$ is finitely generated, and $f(\ideal{a})\subseteq\ideal{b}$, hence $f(\sigma)$ is finitely generated.
\end{proof}

\begin{corollary}[Eakin--Nagata theorem]
Let $\sigma$ be a finite type hereditary torsion theory in $\rMod{A}$, and $f:A\hookrightarrow{B}$ be a ring extension such that $B$ is totally $\sigma$--finitely generated and $B\ideal{p}$ is totally $f(\sigma)$--finitely generated for every prime ideal $\ideal{p}\in\mathcal{K}(\sigma)$, then $A$ is totally $\sigma$--noetherian.
\end{corollary}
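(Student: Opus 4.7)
The plan is to reduce the claim to Proposition~\eqref{pr:140707} applied to the $A$--module $B$, and then to extract $A$ as a submodule of $B$.

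First I would check that for every $\ideal{p}\in\mathcal{K}(\sigma)$ the ideal $B\ideal{p}$, viewed as an $A$--submodule of $B$, is totally $\sigma$--finitely generated. By hypothesis $B\ideal{p}$ is totally $f(\sigma)$--finitely generated as a $B$--module, and the preceding lemma ensures $f(\sigma)$ is of finite type; so one can pick a finitely generated $\ideal{c}\in\mathcal{L}(f(\sigma))$ together with $b_1,\ldots,b_n\in B\ideal{p}$ such that $(B\ideal{p})\ideal{c}\subseteq b_1B+\cdots+b_nB$. Since $\sigma$ is of finite type, choose a finitely generated $\ideal{h}\in\mathcal{L}(\sigma)$ with $\ideal{h}\subseteq f^{-1}(\ideal{c})$; then $f(\ideal{h})B\subseteq\ideal{c}$, whence $(B\ideal{p})\ideal{h}\subseteq b_1B+\cdots+b_nB$. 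Now use that $B$ itself is totally $\sigma$--finitely generated as an $A$--module: fix $a_1,\ldots,a_m\in B$ and a finitely generated $\ideal{h}'\in\mathcal{L}(\sigma)$ with $B\ideal{h}'\subseteq a_1A+\cdots+a_mA$. Multiplying the two containments yields
\[
(B\ideal{p})\ideal{h}\ideal{h}'\subseteq\sum_{i=1}^n b_iB\ideal{h}'\subseteq\sum_{i,j} b_ia_jA,
\]
a finitely generated $A$--submodule of $B\ideal{p}$, while $\ideal{h}\ideal{h}'\in\mathcal{L}(\sigma)$ is finitely generated. Thus $B\ideal{p}$ is totally $\sigma$--finitely generated as an $A$--module for every $\ideal{p}\in\mathcal{K}(\sigma)$.

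Proposition~\eqref{pr:140707} applied to the $A$--module $B$ then gives that $B$ is totally $\sigma$--noetherian as an $A$--module. Because $f$ is injective, $A$ is an $A$--submodule of $B$, and by the first proposition of Section~1 every submodule of a totally $\sigma$--noetherian module is totally $\sigma$--noetherian; consequently $A$ is totally $\sigma$--noetherian as an $A$--module, that is, as a ring. The delicate point is the reduction step, where a $B$--module totalness condition on $B\ideal{p}$ has to be converted into an $A$--module condition; it is precisely the finite type hypothesis on $\sigma$ (and therefore on the induced $f(\sigma)$) that permits the bridge via honest finitely generated ideals rather than only at the filter level.
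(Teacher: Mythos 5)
Your proposal is correct and follows essentially the same route as the paper: reduce to showing $B$ is a totally $\sigma$--noetherian $A$--module, verify that $B\ideal{p}$ is totally $\sigma$--finitely generated over $A$ for each $\ideal{p}\in\mathcal{K}(\sigma)$ by multiplying the two finiteness relations, invoke Proposition~\eqref{pr:140707}, and pass to the submodule $A\subseteq B$. Your write-up is somewhat more explicit than the paper's about converting the $f(\sigma)$--condition on $B$-modules into a $\sigma$--condition on $A$-modules via $f^{-1}$ and the finite type hypothesis, but there is no substantive difference in strategy.
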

\begin{proof}
It is sufficient to prove that $B$ is a totally $\sigma$--noetherian $A$--module because $A$ is submodule of $B$, or equivalently that for every prime ideal $\ideal{p}\in\mathcal{K}(\sigma)$ we have that $\ideal{p}B\subseteq{B}$ is totally $\sigma$--finitely generated. There exists $\ideal{a}\in\mathcal{L}(\sigma)$ such that $\ideal{a}\ideal{p}B\subseteq(p_1,\ldots,p_t)B\subseteq\ideal{p}B$, and there exists $\ideal{a}'\in\mathcal{L}(\sigma)$ such that $\ideal{a}'B\subseteq(b_1,\ldots,b_s)A\subseteq{A}$, hence
\[
\ideal{a}'\ideal{a}\ideal{p}B\subseteq\ideal{a}'(p_1,\ldots,p_t)B\subseteq(p_1,\ldots,p_t)(b_1,\ldots,b_s)A.
\]
\end{proof}

\begin{corollary}[Eakin--Nagata theorem]
Let $\sigma$ be a finite type hereditary torsion theory in $\rMod{A}$, $f:A\hookrightarrow{B}$ be a ring extension such that $B$ is totally $\sigma$--finitely generated and totally $f(\sigma)$--noetherian, then $A$ is totally $\sigma$--noetherian.
\end{corollary}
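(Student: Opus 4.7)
The plan is to reduce this to the preceding corollary (the first form of the Eakin--Nagata theorem), which already yields that $A$ is totally $\sigma$--noetherian under the apparently weaker hypothesis that $B\ideal{p}$ is totally $f(\sigma)$--finitely generated for every prime $\ideal{p}\in\mathcal{K}(\sigma)$. Thus all that needs to be done is to verify this prime-by-prime condition from the stronger global assumption that $B$ is totally $f(\sigma)$--noetherian.

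This verification is immediate: for any prime ideal $\ideal{p}\in\mathcal{K}(\sigma)$, the extended set $\ideal{p}B$ is an ideal of the ring $B$, and hence is totally $f(\sigma)$--finitely generated by the very definition of a totally $f(\sigma)$--noetherian ring. Together with the hypothesis that $B$ is totally $\sigma$--finitely generated as an $A$--module, this supplies all the hypotheses of the previous corollary, so the conclusion follows at once.

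There is essentially no genuine obstacle here: the present statement is a formal specialisation of the preceding one, all the substantive work (reducing to primes in $\mathcal{K}(\sigma)$, constructing the finitely generated witness in $A$ from the $B$--witness by using that $B$ is totally $\sigma$--finitely generated over $A$) having already been carried out in Proposition~\eqref{pr:140707} and its first ring-extension consequence. The only minor point to keep in mind when writing it out is that, since $\sigma$ is of finite type, so is $f(\sigma)$, so any witness $\ideal{b}\in\mathcal{L}(f(\sigma))$ inside the total $f(\sigma)$--finite generation of $\ideal{p}B$ can be taken finitely generated, matching the quantifiers used in the previous argument.
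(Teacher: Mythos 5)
Your proof is correct and is precisely the intended argument: the paper leaves this corollary without a written proof because it is an immediate specialization of the preceding Eakin--Nagata corollary, exactly as you observe — since $B$ is totally $f(\sigma)$--noetherian, every ideal of $B$, in particular $B\ideal{p}$ for each $\ideal{p}\in\mathcal{K}(\sigma)$, is totally $f(\sigma)$--finitely generated, so the hypotheses of the previous corollary are met.
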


\begin{proposition}
Let $\sigma$ be a finite type hereditary torsion theory in $\rMod{A}$, and $f:A\hookrightarrow{B}$ be a ring extension such that $B$ is faithfully flat ($\ideal{a}B\cap{A}=\ideal{a}$ for every ideal $\ideal{a}\subseteq{A}$). If $B$ is totally $f(\sigma)$--noetherian, then $A$ is totally $\sigma$--noetherian.
\end{proposition}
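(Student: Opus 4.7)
The plan is to show that every ideal $\ideal{a}\subseteq A$ is totally $\sigma$--finitely generated by extending it to $B$, using the totally $f(\sigma)$--noetherian hypothesis there, and then contracting the witnesses back to $A$ via the faithful flatness identity $\ideal{a}_0 B\cap A=\ideal{a}_0$.

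First I would fix an ideal $\ideal{a}\subseteq A$ and look at $\ideal{a}B\subseteq B$. By hypothesis, $\ideal{a}B$ is totally $f(\sigma)$--finitely generated, so there exist a finitely generated ideal $\ideal{c}\subseteq\ideal{a}B$ of $B$ and $\ideal{k}\in\mathcal{L}(f(\sigma))$ such that $(\ideal{a}B)\ideal{k}\subseteq\ideal{c}$. The next step is to lower the generators of $\ideal{c}$ back into $A$: each generator of $\ideal{c}$ lies in $\ideal{a}B$, so it is a finite $B$--linear combination of elements of $\ideal{a}$. Collecting all the $A$--coefficients that appear gives a finitely generated sub-ideal $\ideal{a}_0\subseteq\ideal{a}$ (inside $A$) with $\ideal{c}\subseteq\ideal{a}_0 B$, and therefore $(\ideal{a}B)\ideal{k}\subseteq\ideal{a}_0 B$.

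Now I would contract back to $A$. Note that for any $a\in\ideal{a}$ and any $k\in\ideal{k}\cap A$ we have $ak=f(a)f(k)\in(\ideal{a}B)\ideal{k}\cap A\subseteq\ideal{a}_0 B\cap A$, and by the faithful flatness identity $\ideal{a}_0 B\cap A=\ideal{a}_0$. Hence $\ideal{a}\cdot(\ideal{k}\cap A)\subseteq\ideal{a}_0$. By the very definition of $f(\sigma)$, the contraction $\ideal{k}\cap A=f^{-1}(\ideal{k})$ lies in $\mathcal{L}(\sigma)$. Setting $\ideal{h}:=\ideal{k}\cap A\in\mathcal{L}(\sigma)$, we obtain a finitely generated sub-ideal $\ideal{a}_0\subseteq\ideal{a}$ and an $\ideal{h}\in\mathcal{L}(\sigma)$ with $\ideal{a}\ideal{h}\subseteq\ideal{a}_0$, which is exactly what it means for $\ideal{a}$ to be totally $\sigma$--finitely generated.

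The main potential obstacle is step (3), the descent of the generators of $\ideal{c}$ from $\ideal{a}B$ down to an ideal $\ideal{a}_0$ of $A$; it is important to observe that since $\ideal{c}$ is only required to sit inside $\ideal{a}B$ (not to be generated by elements of $f(\ideal{a})$), one must extract the $A$--level part by expanding each generator as a $B$--combination of elements of $\ideal{a}$ and collecting the $A$--coefficients. Once this is done, the only nontrivial tool used is the faithful flatness identity $\ideal{a}_0 B\cap A=\ideal{a}_0$; the finite type of $f(\sigma)$ is not even needed for the argument, although one can if desired shrink $\ideal{h}$ to a finitely generated member of $\mathcal{L}(\sigma)$ using the finite type hypothesis on $\sigma$.
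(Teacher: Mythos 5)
Your proposal is correct and follows essentially the same route as the paper: extend $\ideal{a}$ to $\ideal{a}B$, invoke the totally $f(\sigma)$--noetherian hypothesis, replace the finitely generated witness by one generated by elements of $\ideal{a}$, and contract via $\ideal{a}_0 B\cap A=\ideal{a}_0$, with $f^{-1}(\ideal{k})\in\mathcal{L}(\sigma)$ serving as the denominator ideal. You spell out the descent of the generators more carefully than the paper (which implicitly assumes they may be chosen in $\ideal{a}$), and your observation that the finite type hypothesis is not actually needed here is also accurate.
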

\begin{proof}
Let $\ideal{a}\subseteq{A}$, since $\ideal{a}B\subseteq{B}$ is totally $f(\sigma)$--finitely generated, there exists $\ideal{c}\in\mathcal{L}(\sigma)$ such that $\ideal{c}\ideal{a}B\subseteq(b_1,\ldots,b_t)B$, for some $b_1,\ldots,b_t\in\ideal{a}$. Therefore, $\ideal{c}\ideal{a}=\ideal{c}(\ideal{a}B\cap{A})\subseteq(b_1,\ldots,b_t)B\cap{A}=(b_1,\ldots,b_t)A$.
\end{proof}

In order to consider polynomial extensions, we introduce a new kind of finite type hereditary torsion theories. A finite type hereditary torsion theory $\sigma$ in $\rMod{A}$ is \textbf{almost jansian} or \textbf{anti--archimedian} if for every ideal $\ideal{a}\in\mathcal{L}(\sigma)$ we have $\cap_{n=1}^\infty\ideal{a}^n\in\mathcal{L}(\sigma)$.

\medskip
\begin{examples}
\begin{enumerate}[(1)]\sepa
\item
An example of almost jansian hereditary torsion theories are the jansian. A hereditary torsion theory $\sigma$ is jansian whenever $\mathcal{L}(\sigma)$ has a filter basis constituted by an ideal $\ideal{a}$; in this case $\ideal{a}$ must be idempotent. If in addition, $\sigma$ is of finite type then $\ideal{a}$ is finitely generated, hence generated by an idempotent element, say $e\in{A}$, and the localization of $A$ at $\sigma$ is just the ring $eA$.
\item
A multiplicatively closed subset $\Sigma\subseteq{A}$ is anti--archimedean whenever $\cap_{n=1}^\infty{a^nA}\cap{\Sigma}\neq\varnothing$, hence if, and only if, $\sigma_\Sigma$ is almost jansian.
\item
An integral domain $D$ is anti--archimedean if $\cap_{n=1}^\infty{a^nD}\neq0$ for each $a\in{D}$, hence we can rewrite $D$ is anti--archimedean if, and only if, $\sigma_{D\setminus\{0\}}$ is almost jansian.
\item
Let $\ideal{p}\subseteq{A}$ be a prime ideal, the hereditary torsion theory $\sAp$ is of finite type, and it is almost jansian if, and only if, for any ideal $\ideal{a}\subseteq{A}$ if $\ideal{a}\nsubseteq\ideal{p}$, then $\cap_n\ideal{a}^n\nsubseteq\ideal{p}$. In particular, if, and only if, $A/\ideal{p}$ is an anti--archimedean domain; let us call such a $\ideal{p}$ an \textbf{anti--archimedean prime ideal} of $A$.
\item
For every strongly prime ideal $\ideal{p}\subseteq{A}$, see \cite{Jayaram/Oral/Tekir:2013}, the hereditary torsion theory $\sAp$ is almost jansian.
\item
Since the intersection of finitely many finite type hereditary torsion theories is of finite type, if $\{\ideal{p}_1,\ldots,\ideal{p}_t\}$ are anti--archimedean prime ideals of $A$, then $\wedge_{i=1}^t\sigma_{A\setminus\ideal{p}_i}$ is almost jansian.
\end{enumerate}
\end{examples}

\begin{theorem}[Hilbert basis theorem]
Let $\sigma$ be a finite type almost jansian hereditary torsion theory in $\rMod{A}$, and $\sigma'$ the induced hereditary torsion theory in $\rMod{A[X]}$. If $A$ is totally $\sigma$--noetherian, then $A[X]$ is totally $\sigma'$--noetherian.
\end{theorem}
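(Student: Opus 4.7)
The plan is to transpose the classical proof of Hilbert's basis theorem to the totally $\sigma$--noetherian setting. The low--degree part of an ideal of $A[X]$ lives in a free $A$--module and is handled directly by the $A$--module hypothesis; the high--degree part is reduced modulo finitely many generators by iterated leading--coefficient cancellation, and the almost jansian hypothesis absorbs the unbounded number of reduction steps.

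Fix an ideal $\ideal{b}\subseteq A[X]$ and let $L\subseteq A$ be the ideal of leading coefficients of nonzero elements of $\ideal{b}$ (together with $0$); it is an $A$--ideal because multiplying a polynomial by $X$ preserves its leading coefficient, so degrees can be aligned before addition. Because $A$ is totally $\sigma$--noetherian, there exist $a_1,\ldots,a_n\in L$ and a finitely generated $\ideal{h}\in\mathcal{L}(\sigma)$ with $L\ideal{h}\subseteq(a_1,\ldots,a_n)$. Pick $f_i\in\ideal{b}$ with $\lc(f_i)=a_i$ and $\deg f_i=d_i$, and set $d=\max_i d_i$. The submodule $\ideal{b}_{<d}$ of polynomials in $\ideal{b}$ of degree $<d$ is an $A$--submodule of the free $A$--module $\bigoplus_{k<d}AX^k\cong A^d$; by the earlier proposition on finite direct sums and submodules of totally $\sigma$--noetherian modules, there exist a finitely generated $\ideal{h}'\in\mathcal{L}(\sigma)$ and $g_1,\ldots,g_m\in\ideal{b}_{<d}$ with $\ideal{b}_{<d}\ideal{h}'\subseteq(g_1,\ldots,g_m)A$. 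Put $\ideal{b}'=(f_1,\ldots,f_n,g_1,\ldots,g_m)A[X]\subseteq\ideal{b}$; this is the candidate finitely generated sub--ideal.

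The core step is a degree reduction: if $f\in\ideal{b}$ has degree $k\geq d$ and leading coefficient $c$, then for any $h_1\in\ideal{h}$ we can write $h_1c=\sum_i b_ia_i$ and cancel the top term $h_1f-\sum_ib_iX^{k-d_i}f_i\in\ideal{b}$ of degree $<k$; iterating at most $k-d+1$ times gives $\ideal{h}^{k-d+1}f\subseteq\ideal{b}_{<d}+\ideal{b}'$. The main obstacle, and the reason a plain finite--type hypothesis does not suffice, is that the exponent $k-d+1$ grows with $\deg f$, so no fixed power of $\ideal{h}$ works uniformly across $\ideal{b}$. This is precisely where almost jansian is used: setting $\ideal{h}^{\ast}=\bigcap_{n\geq 1}\ideal{h}^n\in\mathcal{L}(\sigma)$ gives a single ideal in $\mathcal{L}(\sigma)$ contained in every $\ideal{h}^{k-d+1}$, so $\ideal{h}^{\ast}f\subseteq\ideal{b}_{<d}+\ideal{b}'$ holds uniformly for all $f\in\ideal{b}$ (the case $\deg f<d$ being trivial since then $f\in\ideal{b}_{<d}$).

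Combining with the generation of $\ideal{b}_{<d}$ as an $A$--module yields $\ideal{h}'\ideal{h}^{\ast}\cdot\ideal{b}\subseteq\ideal{h}'\ideal{b}_{<d}+\ideal{h}'\ideal{b}'\subseteq\ideal{b}'$. Since Gabriel filters are closed under finite products of ideals, $\ideal{h}'\ideal{h}^{\ast}\in\mathcal{L}(\sigma)$, and the extended ideal $\ideal{h}'\ideal{h}^{\ast}A[X]$ lies in $\mathcal{L}(\sigma')$ by the explicit description of the induced filter given earlier. Hence $\ideal{b}$ is totally $\sigma'$--finitely generated, and $A[X]$ is totally $\sigma'$--noetherian as claimed.
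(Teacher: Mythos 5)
Your proposal is correct and follows essentially the same route as the paper's proof: form the leading--coefficient ideal and pick $f_1,\ldots,f_n$ of bounded degree $d$, treat the degree--$<d$ part of $\ideal b$ as a submodule of the free module $A^d$ (hence totally $\sigma$--finitely generated), reduce higher--degree polynomials by cancelling top coefficients one power of $\ideal h$ at a time, and invoke almost jansianness via $\ideal h^{\ast}=\bigcap_n\ideal h^n\in\mathcal L(\sigma)$ to absorb the unbounded number of reduction steps. Your bookkeeping is cleaner than the paper's (you keep $\ideal h$ and $\ideal h'$ separate and multiply them only at the end, whereas the paper introduces an auxiliary $\ideal h_n$ for each degree and then merges them), but the underlying argument is identical.
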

\begin{proof}
Let $\ideal{b}\subseteq{A[X]}$ be an ideal, we define $\ideal{a}=\{\lc(F)\mid\;F\in\ideal{b}\}$, being, for convenience, $\lc(0)=0$. Thus $\ideal{a}\subseteq{A}$ is an ideal, and there exist $\ideal{h}\in\mathcal{L}(\sigma)$, finitely generated, and $a_1,\ldots,a_t\in\ideal{a}$ such that $\ideal{a}\ideal{h}\subseteq(a_1,\ldots,a_t)A\subseteq\ideal{a}$. Let $F_1,\ldots,F_t\in\ideal{b}$ such that $\lc(F_i)=a_i$ for any $i\in\{1,\ldots,t\}$, and $d=\max\{\deg(F_i)\mid\;i\in\{1,\ldots,t\}\}$.
\par
For any $n\in\mathbb{N}\setminus\{0\}$, we define $\mathcal{H}_n=\{F\in\ideal{b}\mid\;\deg(F)<n\}$, thus $\mathcal{H}_n$ is an $A$--module isomorphic to a submodule of the free $A$--module $A^n$, hence it is totally $\sigma$--finitely generated. There exist $\ideal{h}_n\in\mathcal{L}(\sigma)$, finitely generated, and $H_1,\ldots,H_s\in\mathcal{H}_n$ such that $\mathcal{H}_n\ideal{h}_n\subseteq(H_1,\ldots,H_s)A\subseteq\mathcal{H}_n$.
\par
When we take $\mathcal{H}_d$, we may assume that $\ideal{h}_n$ and $\ideal{h}$ are equal. Let us suppose that $\ideal{h}=(h_1,\ldots,h_r)A$.
\par
Let $F\in\ideal{b}$. If $f=\deg(F)<d$, then $F\in\mathcal{H}_d$. If $f=\deg(F)\geq{d}$, we have $\lc(F)\in\ideal{a}$, and $\lc(F)\ideal{h}\subseteq(a_1,\ldots,a_t)A$. For any $j\in\{1,\ldots,r\}$ there exists an $A$--linear combination $\lc(F)h_j=\sum_{i=1}^ta_ic_{i,j}$. Hence there exist natural numbers $e_1,\ldots,e_t$ such that $Fh_j-\sum_{i=1}^tF_iX^{e_i}c_{i,j}=G_j$ is a polynomial in $\ideal{b}$ of degree less than $f$, i.e., $G_j\in\mathcal{H}_f$. Since $Fh_j=\sum_{i=1}^tF_ic_{i,j}+G_j$, then $F\ideal{h}\subseteq(F_1,\ldots,F_t)A+(G_1,\ldots,G_r)A$. We resume this fact saying that for any $0\neq{F}\in\ideal{b}$ of degree $f\geq{d}$, there exists a finite subset $\mathcal{G}\subseteq\mathcal{H}_f$ such that $F\ideal{h}\subseteq(F_1,\ldots,F_t)A+\mathcal{G}A$; if $f<d$, then $\mathcal{G}=\{H_1,\ldots,H_s\}\subseteq\mathcal{H}_d$.
\par
Starting from a polynomial $F\in\ideal{b}$ of degree $f\geq{d}$ there exists a finite subset $\mathcal{G}_1\subseteq\mathcal{H}_f$ such that $f\ideal{h}\subseteq(F_1,\ldots,F_t)A+\mathcal{G}_1A$. For any $G\in\mathcal{G}_1$ there exists $\mathcal{G}'\subseteq\mathcal{H}_{f-1}$ such that $G\ideal{h}\subseteq(F_1,\ldots,F_t)A+\mathcal{G}'A$, hence there exists a finite subset $\mathcal{G}_2\subseteq\mathcal{H}_{f-1}$ such that  $F\ideal{h}^2\subseteq(F_1,\ldots,F_t)A+\mathcal{G}_2A$. And, iterating this process, there exists $k\in\mathbb{N}$ and a finite subset $\mathcal{G}\subseteq\mathcal{H}_d$ such that $F\ideal{h}^k\subseteq(F_1,\ldots,F_t)A+\mathcal{G}A$.
\par
In consequence, $\ideal{b}(\cap_{k\in\mathbb{N}}\ideal{h}^k)\subseteq(F_1,\ldots,F_t)+(\mathcal{G})\subseteq\ideal{b}$, and since $\sigma$ is almost jansian $\cap_k\ideal{h}^k\in\mathcal{L}(\sigma)$, then $\ideal{b}$ is totally $\sigma'$--finitely generated.
\end{proof}

The following consequence also holds.

\begin{corollary}
Let $\sigma$ be a finite type almost jansian hereditary torsion theory in $\rMod{A}$, and $\sigma'$ the induced hereditary torsion theory in $\rMod{A[X_1,\ldots,X_n]}$. If $A$ is totally $\sigma$--noetherian, then $A[X_1,\ldots,X_n]$ is totally $\sigma'$--noetherian.
\end{corollary}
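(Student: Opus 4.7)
The plan is to proceed by induction on $n$, with the case $n=1$ handled directly by the preceding Hilbert basis theorem. The inductive step will apply that theorem to the coefficient ring $A[X_1,\ldots,X_{n-1}]$, but to do so I must first check that the hypotheses of the theorem (finite type and almost jansian) transfer correctly to the iterated polynomial setting.

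First I would record a compatibility observation: if $f:A\to B$ and $g:B\to C$ are ring maps, then $g(f(\sigma))=(g\circ f)(\sigma)$, since by the definition of $\mathcal{L}$ on induced torsion theories one has $(g\circ f)^{-1}(\ideal{c})\in\mathcal{L}(\sigma)$ iff $g^{-1}(\ideal{c})\in\mathcal{L}(f(\sigma))$. Applied to the inclusions $A\hookrightarrow A[X_1,\ldots,X_{n-1}]\hookrightarrow A[X_1,\ldots,X_n]$, this lets me identify the torsion theory $\sigma'$ on $A[X_1,\ldots,X_n]$ with the torsion theory induced on $A[X_1,\ldots,X_{n-1}][X_n]$ from the theory $\sigma_{n-1}$ induced on $A[X_1,\ldots,X_{n-1}]$ from $\sigma$.

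Next I would verify that the induced theory preserves the two hypotheses of the Hilbert basis theorem. The finite type case is already covered by the earlier lemma. For \emph{almost jansian}, let $f:A\to A[X]$ be the inclusion and take $\ideal{b}\in\mathcal{L}(f(\sigma))$, so $\ideal{a}:=f^{-1}(\ideal{b})\in\mathcal{L}(\sigma)$. Then $\ideal{a}A[X]\subseteq\ideal{b}$, hence $\ideal{a}^nA[X]=(\ideal{a}A[X])^n\subseteq\ideal{b}^n$ for every $n$, and taking intersections I get $\bigcap_n\ideal{a}^nA[X]\subseteq\bigcap_n\ideal{b}^n$. Since $f^{-1}$ commutes with intersections and $f^{-1}(\ideal{a}^nA[X])=\ideal{a}^n$, the hypothesis that $\sigma$ is almost jansian yields
\[
f^{-1}\!\left(\bigcap_n\ideal{b}^n\right)\supseteq f^{-1}\!\left(\bigcap_n\ideal{a}^nA[X]\right)=\bigcap_n\ideal{a}^n\in\mathcal{L}(\sigma),
\]
so $\bigcap_n\ideal{b}^n\in\mathcal{L}(f(\sigma))$ by the upward-closure of the Gabriel filter. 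Thus $f(\sigma)$ is again almost jansian.

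With these preparations the induction is automatic: assuming $A[X_1,\ldots,X_{n-1}]$ is totally $\sigma_{n-1}$--noetherian with $\sigma_{n-1}$ finite type and almost jansian, the Hilbert basis theorem gives that $A[X_1,\ldots,X_{n-1}][X_n]$ is totally $\sigma_{n-1}(X_n)$--noetherian, which by the compatibility observation above is exactly $\sigma'$. I expect the only delicate point to be the almost jansian transfer, which is why I would isolate it as the first step; the rest is either bookkeeping or a direct appeal to the theorem.
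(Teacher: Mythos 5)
Your proof is correct, and it is the natural way to fill in the induction that the paper leaves implicit (the corollary is stated without proof). The one genuinely non-trivial point, which you correctly isolate, is that the \emph{almost jansian} property must be preserved under the induced torsion theory $f(\sigma)$; the paper proves the finite-type transfer (the Lemma opening Section~4) but says nothing about almost jansian, and without it the single-variable Hilbert basis theorem cannot be reapplied. Your argument for this transfer is sound: pull $\ideal{b}\in\mathcal{L}(f(\sigma))$ back to $\ideal{a}=f^{-1}(\ideal{b})\in\mathcal{L}(\sigma)$, note $\ideal{a}^n\subseteq f^{-1}(\ideal{b}^n)$ so that $\bigcap_n\ideal{a}^n\subseteq f^{-1}(\bigcap_n\ideal{b}^n)$, and conclude by upward closure of the Gabriel filter. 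One small remark: you only need the inclusion $\ideal{a}^n\subseteq f^{-1}(\ideal{a}^nA[X])$, not the equality $f^{-1}(\ideal{a}^nA[X])=\ideal{a}^n$ (which does hold here because $A[X]$ is a free $A$-module, but is not needed); stated this way your lemma on almost jansian transfer holds verbatim for an arbitrary ring map $f:A\to B$, which is a clean general observation worth recording. The compatibility $g(f(\sigma))=(g\circ f)(\sigma)$ is also correctly verified and correctly used to identify $\sigma'$ with the iterated induced theory, so the induction closes.
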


Also we can prove that totally $\sigma$--noetherianness is preserved by localization at multiplicatively subsets.

\begin{proposition}\label{pr:20191116}
Let $A$ be a ring and $\sigma$ be a finite type hereditary torsion theory in $\rMod{A}$. If $\Sigma\subseteq{A}$ is a multiplicatively subset, and we denote by $\sigma'$ the induced hereditary torsion theory in $\rMod{A_\Sigma}$, then $A_\Sigma$ is totally $\sigma'$--noetherian.
\end{proposition}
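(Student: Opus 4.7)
The plan is to recognize this as an immediate application of the Lemma in Section~1 which asserts that, for a ring map $f:A\to B$ along which every ideal of $B$ is extended, totally $\sigma$--noetherianness of $A$ forces totally $f(\sigma)$--noetherianness of $B$. Applied to the canonical localization map $q:A\to A_\Sigma$, this delivers the desired conclusion, because it is standard (and explicitly listed as example~(2) after that Lemma) that every ideal of $A_\Sigma$ is of the form $q(\ideal{a})A_\Sigma$ for some ideal $\ideal{a}\subseteq A$.

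The execution would go as follows. First, I would identify $\sigma'$ with $q(\sigma)$ via the description $\mathcal{L}(\sigma')=\{\ideal{b}\subseteq A_\Sigma\mid q^{-1}(\ideal{b})\in\mathcal{L}(\sigma)\}$ recalled at the beginning of Section~1. Second, I would fix an arbitrary ideal $\ideal{b}\subseteq A_\Sigma$, write it as $\ideal{b}=q(\ideal{a})A_\Sigma$, and apply the (implicit) hypothesis that $A$ is totally $\sigma$--noetherian: there exists a finitely generated $\ideal{a}'\subseteq\ideal{a}$ and, since $\sigma$ is of finite type, a finitely generated $\ideal{h}\in\mathcal{L}(\sigma)$ with $\ideal{a}\ideal{h}\subseteq\ideal{a}'$. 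Finally, extending to $A_\Sigma$ gives
\[
\ideal{b}\cdot q(\ideal{h})A_\Sigma \;=\; q(\ideal{a}\ideal{h})A_\Sigma \;\subseteq\; q(\ideal{a}')A_\Sigma \;\subseteq\; \ideal{b},
\]
with $q(\ideal{a}')A_\Sigma$ finitely generated and $q(\ideal{h})A_\Sigma\in\mathcal{L}(\sigma')$, the latter being automatic because $q^{-1}(q(\ideal{h})A_\Sigma)\supseteq\ideal{h}\in\mathcal{L}(\sigma)$ and $\mathcal{L}(\sigma)$ is upward closed.

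No serious obstacle arises; the argument really does reduce to the two ingredients already packaged in the earlier Lemma, namely that localizations have every ideal extended and that Gabriel filters of finite type are stable under extension along a ring map. The only mild point worth highlighting is the identification $\sigma'=q(\sigma)$ and the verification that $q(\ideal{h})A_\Sigma\in\mathcal{L}(\sigma')$, both of which are bookkeeping rather than substance. If one prefers a self-contained write-up, one can simply repeat the three-line computation above in place of citing the Lemma.
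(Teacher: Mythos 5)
Your proof is correct and rests on the same underlying computation as the paper's: pull an ideal back along the localization map $q$, apply total $\sigma$--noetherianness of $A$ to the preimage (this hypothesis is indeed missing from the printed statement but is clearly intended, and you rightly flag it as implicit), and extend the resulting containment back to $A_\Sigma$. The difference is organizational. You observe that the statement is a verbatim instance of the Lemma in Section~1 on ring maps with every ideal extended, whose accompanying list of examples even names localization explicitly, so the proof collapses to a citation of that Lemma. The paper's in-text proof inlines the same computation but, without apparent benefit, phrases it only for a prime ideal of $A_\Sigma$; since the argument never uses primeness, one should either drop that restriction (as you do) or supply an implicit appeal to the relative Cohen theorem to conclude. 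Your route is the cleaner write-up of the same idea.
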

\begin{proof}
Let $g:A\longrightarrow{A_\Sigma}$ be the canonical ring map. Let $\ideal{p}\subseteq{A_\Sigma}$ be a prime ideal, then $g^{-1}(\ideal{b})\subseteq{A}$ is prime. By the hypothesis there exist $\ideal{h}\in\mathcal{L}(\sigma)$ and $b_1,\ldots,b_t\in{g^{-1}(\ideal{b})}$ such that $g^{-1}(\ideal{b})\ideal{h}\subseteq(b_1,\ldots,b_t)A\subseteq{g^{-1}(\ideal{b})}$. If we localize at $\Sigma$, then
\[
(g^{-1}(\ideal{b})\ideal{h})_\Sigma\subseteq(b_1,\ldots,b_t)A_\Sigma\subseteq{g^{-1}(\ideal{b})}_\Sigma,
\]
i.e.,
\[
\ideal{b}\ideal{h}_\Sigma\subseteq(b_1,\ldots,b_t)A_\Sigma\subseteq\ideal{b}.
\]
Therefore, $\ideal{b}$ is totally $\sigma'$--finitely generated.
\end{proof}

As a consequence we have:

\begin{corollary}
Let $A$ be a ring and $\sigma$ be an almost jansian finite type hereditary torsion theory such that $A$ is totally $\sigma$--noetherian, if $\sigma'$ is the induced hereditary torsion theory in $A[X,X^{-1}]$, then $A[X,X^{-1}]$ is totally $\sigma'$--noetherian.
\end{corollary}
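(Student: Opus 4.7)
The plan is to factor the extension $A\hookrightarrow A[X,X^{-1}]$ as $A\xrightarrow{f}A[X]\xrightarrow{g}A[X,X^{-1}]$, and to apply in sequence the Hilbert basis theorem proved just above and Proposition~\ref{pr:20191116}. First, since $\sigma$ is of finite type, almost jansian, and $A$ is totally $\sigma$--noetherian, the Hilbert basis theorem yields that $A[X]$ is totally $\tau$--noetherian, where $\tau:=f(\sigma)$ denotes the torsion theory on $A[X]$ induced from $\sigma$; by the lemma about induced theories, $\tau$ is again of finite type. Next, set $\Sigma=\{X^n\mid n\geq 0\}$, a multiplicatively closed subset of $A[X]$, and note that $A[X]_\Sigma=A[X,X^{-1}]$. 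Proposition~\ref{pr:20191116}, applied to the ring $A[X]$ with torsion theory $\tau$ and multiplicative set $\Sigma$, then gives that $A[X,X^{-1}]$ is totally $g(\tau)$--noetherian, where $g\colon A[X]\to A[X,X^{-1}]$ is the canonical map.

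To conclude, I would verify that $g(\tau)$ coincides with the torsion theory $\sigma'$ of the statement, i.e., the one induced in $\rMod{A[X,X^{-1}]}$ directly from $\sigma$ along the composite $g\circ f$. This is a routine functoriality check on Gabriel filters: for any ideal $\ideal{c}\subseteq A[X,X^{-1}]$ the condition $\ideal{c}\in\mathcal{L}(g(\tau))$ unwinds as $g^{-1}(\ideal{c})\in\mathcal{L}(\tau)$, hence as $f^{-1}(g^{-1}(\ideal{c}))=(g\circ f)^{-1}(\ideal{c})\in\mathcal{L}(\sigma)$, which is exactly membership in $\mathcal{L}(\sigma')$. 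Thus $g(f(\sigma))=(g\circ f)(\sigma)=\sigma'$, and the two notions agree.

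There is no real obstacle: the corollary is essentially the composition of the two preceding results. The almost jansian hypothesis on $\sigma$ is used only to invoke the Hilbert basis theorem in the first step; the localization step via Proposition~\ref{pr:20191116} requires only finite type, which $\tau$ inherits automatically from $\sigma$, so the almost jansian property need not be checked for $\tau$ or for $\sigma'$.
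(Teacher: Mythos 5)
Your proof is correct and takes exactly the same route as the paper: factor $A\hookrightarrow A[X,X^{-1}]$ through $A[X]$, invoke the Hilbert basis theorem for the polynomial step, then apply Proposition~\ref{pr:20191116} to the localization $A[X]_\Sigma=A[X,X^{-1}]$ with $\Sigma=\{X^n\}$. Your added functoriality check that $g(f(\sigma))=(g\circ f)(\sigma)=\sigma'$, and your observation that only the first step needs the almost jansian hypothesis, are both correct and in fact slightly more careful than the paper's own terse proof.
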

\begin{proof}
Let $A\stackrel{f}{\longrightarrow}A[X]\stackrel{g}{\longrightarrow}A[X,X^{-1}]$. If we consider $\Sigma=\{X^t\mid\;t\in\mathbb{N}\}\subseteq{A[X]}$, then $A[X,X^{-1}]=A[X]_\Sigma$. Since $\sigma$ is almost jansian then $A$ is totally $\sigma$--noetherian, then $A[X]$ is totally $f(\sigma)$--noetherian, and $A[X,X^{-1}]$ is totally $gf(\sigma)$--noetherian, by Proposition~\eqref{pr:20191116}.
\end{proof}

\begin{theorem}[Hilbert basis theorem]
Let $\sigma$ be a finite type almost jansian hereditary torsion theory in $\rMod{A}$, and $\sigma'$ the induced hereditary torsion theory in $\rMod{\serial{A}{X}}$. If $A$ is totally $\sigma$--noetherian, then $\serial{A}{X}$ is totally $\sigma'$--noetherian.
\end{theorem}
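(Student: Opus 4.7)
The plan is to mimic the polynomial Hilbert basis proof above, replacing degree, leading coefficient, and the termination-by-decreasing-degree argument with order, initial coefficient, and the $X$-adic completeness of $\serial{A}{X}$.

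First I would fix an ideal $\ideal{b}\subseteq\serial{A}{X}$ and, for each $n\geq 0$, set $\ideal{a}_n=\{(G)_n\mid G\in\ideal{b},\,\mathrm{ord}(G)\geq n\}\cup\{0\}$. Multiplication by $X$ gives an ascending chain $\ideal{a}_0\subseteq\ideal{a}_1\subseteq\cdots$ whose union $\ideal{a}$ is an ideal of $A$. Totally $\sigma$--noetherianness of $A$ produces $a_1,\ldots,a_t\in\ideal{a}$ and a finitely generated $\ideal{h}\in\mathcal{L}(\sigma)$ with $\ideal{a}\ideal{h}\subseteq(a_1,\ldots,a_t)A\subseteq\ideal{a}$. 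Each $a_i$ is the coefficient at $X^{n_i}$ of some $F_i\in\ideal{b}$ with $\mathrm{ord}(F_i)\geq n_i$; setting $d=\max n_i$ and replacing $F_i$ by $X^{d-n_i}F_i$, I may assume $\mathrm{ord}(F_i)=d$ and $(F_i)_d=a_i$. The low part of $\ideal{b}$ is handled through the $A$--module truncation $\pi_d:\serial{A}{X}\longrightarrow A^d$: since $A^d$ is totally $\sigma$--noetherian, $\pi_d(\ideal{b})$ is totally $\sigma$--finitely generated, and after multiplying $\ideal{h}$ by the corresponding finitely generated ideal in $\mathcal{L}(\sigma)$ I may assume the same $\ideal{h}$ satisfies $\pi_d(\ideal{b})\ideal{h}\subseteq(H_1,\ldots,H_s)A$ as well. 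Lift each $H_k$ to $G_k\in\ideal{b}$ with $\pi_d(G_k)=H_k$, and set $I=(F_1,\ldots,F_t,G_1,\ldots,G_s)\serial{A}{X}$.

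By induction on $m$ I would then establish the key containment
\[
F\ideal{h}^{m+1}\subseteq{I}+(\ideal{b}\cap{X^{d+m}\serial{A}{X}})\qquad(F\in\ideal{b},\ m\geq 0),
\]
the base case $m=0$ using the truncation reduction, and the inductive step using that any $S\in\ideal{b}$ of order $\geq d+m$ has $(S)_{d+m}\in\ideal{a}_{d+m}\subseteq\ideal{a}$, so $(S)_{d+m}h'\in(a_1,\ldots,a_t)$ for $h'\in\ideal{h}$, allowing cancellation via the $X^mF_i$--terms. Since $\sigma$ is almost jansian, $\cap_k\ideal{h}^k\in\mathcal{L}(\sigma)$, whence $\ideal{h}':=(\cap_k\ideal{h}^k)\serial{A}{X}$ belongs to $\mathcal{L}(\sigma')$, and for $F\in\ideal{b}$ and $h\in\cap_k\ideal{h}^k$ the containment gives $Fh\in I+\ideal{b}\cap X^{d+m}\serial{A}{X}$ for every $m$.

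To conclude $Fh\in I$, I would invoke the $X$--adic completeness of $\serial{A}{X}$ and build a representation $Fh=\sum_i F_ip_i+\sum_jG_jq_j$ with $p_i,q_j\in\serial{A}{X}$ as the limit of polynomial approximations constructed stage by stage: at stage $m+1$, the fresh $\ideal{h}$--factor supplied by $h\in\ideal{h}^{m+1}$ is used to bring the leading remainder coefficient from $\ideal{a}_{d+m}$ into $(a_1,\ldots,a_t)$, producing a new summand of order $\geq m$ in $X$ to be added to some $p_i$. The successive approximations agree in ever higher coefficients, forming a Cauchy sequence whose limit lies in $I$ and equals $Fh$. This yields $\ideal{b}\ideal{h}'\subseteq I$, so $\ideal{b}$ is totally $\sigma'$--finitely generated.

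The main obstacle I expect is precisely this last step: organising the inductive construction so that the polynomial approximations at successive stages agree in their lower-order coefficients, producing a genuine Cauchy sequence in $\serial{A}{X}$ rather than a family of independent stage-by-stage expressions. This is exactly where the polynomial argument breaks down (the iteration no longer terminates) and where the almost jansian hypothesis becomes essential: a single element $h\in\cap_k\ideal{h}^k$ must supply compatible $\ideal{h}$--factors at every stage of the infinite iteration.
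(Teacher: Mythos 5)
Your outline mirrors the paper's: form the leading‑coefficient ideals $\ideal{a}_n$, use totally $\sigma$--noetherianness of $A$ to obtain a single finitely generated $\ideal{h}\in\mathcal{L}(\sigma)$ that works uniformly for the $\ideal{a}_n$, lift generators to series $F_i$ of a common order $d$, and iterate the reduction so that $F\ideal{h}^{m+1}\subseteq I+\ideal{b}\cap X^{d+m}\serial{A}{X}$ for every $m$. Handling the ``low part'' by the truncation $\pi_d:\serial{A}{X}\to A^d$ is a clean equivalent of the paper's stage‑by‑stage treatment of $\ideal{a}_n$ for $n\leq m$; the inductive containment you state is correct and I checked the base case and inductive step.

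The genuine difficulty is exactly the one you flag, and I want to stress that it is a real gap, not just a point to be tidied up: neither your sketch nor the paper's own proof actually closes it. From $F\ideal{h}^{s}\subseteq I+\ideal{b}\cap X^{d+s}\serial{A}{X}$ for every $s$ and $g\in\ideal{h}_0:=\cap_k\ideal{h}^k$, what one obtains is only that $Fg$ lies in $\cap_s\bigl(I+X^{d+s}\serial{A}{X}\bigr)$, i.e.\ in the $X$--adic closure of $I$; to conclude $Fg\in I$ one needs $I$ to be $X$--adically closed, and a finitely generated ideal of $\serial{A}{X}$ need not be closed when $A$ is not noetherian. Your attempted repair via a Cauchy sequence does not obviously escape this: at depth $s$ one expresses $g$ as a sum of $s$--fold products of elements of $\ideal{h}$ and runs $s$ reduction steps from scratch, so the resulting coefficient polynomials $p^{(s)}_i$ are produced by unrelated factorisations of $g$ and there is no reason for $p^{(s)}_i$ and $p^{(s+1)}_i$ to agree in low degree. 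To get a genuine Cauchy sequence one would have to arrange that passing from stage $s$ to $s+1$ only \emph{appends} a coefficient, which requires the stage‑$s$ remainder $R_s$ to retain a spendable factor in $\ideal{h}$ (so that $(R_s)_{d+s}\in\ideal{a}_{d+s}\ideal{h}\subseteq(a_1,\ldots,a_t)$ rather than merely $(R_s)_{d+s}\in\ideal{a}_{d+s}$); neither your construction nor the paper's tracks such a factor, and it is not clear it can be maintained in general. So the honest assessment is: your proposal reproduces the paper's argument faithfully, including its unjustified final step, and you are right to single that step out as the crux — it is where a complete proof still needs a substantive additional idea (e.g.\ an argument that the relevant ideal is $X$--adically closed, or a refined induction that carries an $\ideal{h}$--factor through the remainder).
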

\begin{proof}
Let $\ideal{a}\subseteq\serial{A}{X}$ be an ideal. For any $n\in\mathbb{N}$ we define $\ideal{a}_n=\langle\textrm{first coefficient of a }F\in\ideal{a}\cap(X^n)\rangle$.
Hence we obtain an increasing chain $\ideal{a}_0\subseteq\ideal{a}_1\subseteq\cdots$. Since $A$ is totally $\sigma$--noetherian, there exist $m\in\mathbb{N}$ and $\ideal{h}_*\in\mathcal{L}(\sigma)$, finitely generated, such that $\ideal{a}_s\ideal{h}_*\subseteq\ideal{a}_m$ for any $s\geq{m}$. Otherwise, for any $\ideal{a}_n$ there exists a finitely generated ideal $\ideal{a}'_n\subseteq\ideal{a}_n$, and $\ideal{h}_n\in\mathcal{L}(\sigma)$ such that $\ideal{a}_n\ideal{h}_n\subseteq\ideal{a}'_n$. If we take $\ideal{h}=\ideal{h}_*\ideal{h}_1\cdots\ideal{h}_m$, then
\[
\begin{array}{ll}
\ideal{a}_n\ideal{h}\subseteq\ideal{a}'_n,\textrm{ for any }n\leq{m}\\
\ideal{a}_n\ideal{h}\subseteq\ideal{a}\ideal{h}_*\ideal{h}_m\subseteq\ideal{a}_m\ideal{h}_m\subseteq\ideal{a}'_m,
\textrm{ for any }n>m.
\end{array}
\]
For any $n=0,1,\ldots,m$, if $\ideal{a}'_n=\langle{a_{n,1},\ldots,a_{n,t_n}}\rangle$, we take $F_{i,j}\in\ideal{a}\cap(X^n)$ such that the first coefficient of $F_{i,j}$ is $a_{i,j}$.
\par
For any $F\in\ideal{a}$ we have: $F\ideal{h}\in(\ideal{a}'_0+\ideal{a}'_1X+\cdots)\cap\ideal{a}\cap(X^n)$. If $F=(b_0+b_1X+\cdots)X^n\in\ideal{a}\cap(X^n)$, then $F\ideal{h}\in(\ideal{a}'_nX^n+\cdots)\cap\ideal{a}\cap(X^n)$, and, for every $h\in\ideal{h}$ there exists a linear combination $b_0h=\sum_{j=1}^{t_n}a_{n,j}c_{n,j}$, hence $Fh-\sum_{j=1}^{t_n}F_{n,j}c_{n,j}\in\ideal{a}\cap(X^{n+1})$. In the same way, for every $h'\in\ideal{h}$ there
exists a linear combination
$Fhh'-\sum_{j=1}^{t_n}F_{n,j}c_{n,j}h'-\sum_{j=1}^{t_{n+1}}F_{n+1,j}c_{n+1,j}\in\ideal{a}\cap(X^{n+2})$.
\par
This reduces the problem to study what happens with the elements in $\ideal{a}\cap(X^m)$. As we saw before, if $F\in\ideal{a}\cap(X^m)$, for any $h_1\in\ideal{h}$ there exists a linear combination $Fh_1-\sum_{j=1}^{t_m}F_{m,j}c_{1,m,j}\in\ideal{a}\cap(X^{m+1})$. Since $\ideal{a}_{m+1}\ideal{h}\subseteq\ideal{a}'_m$, for any $h_2\in\ideal{h}$ there exists a linear combination
$Fh_1h_2-\sum_{j=1}^{t_m}F_{m,j}c_{1,m,j}h_2-\sum_{j=1}^{t_m}F_{m,j}c_{2,m,j}\in\ideal{a}\cap(X^{m+2})$; therefore,
$Fh_1h_2-\sum_{j=1}^{t_m}F_{m,j}(c_{1,m,j}h_2+c_{2,m,j})\in\ideal{a}\cap(X^{m+2})$. In general,
$$
Fh_1h_2\cdots{h_s}-\sum_{j=1}^{t_m}F_{m,j}(c_{1,m,j}h_2\cdots{h_s}+c_{2,m,j}h_3\cdots{h_s}+\cdots+c_{s,m,j})
\in\ideal{a}\cap(X^{m+s}).
$$
\par
Since $\sigma$ is almost jansian, i.e., $\cap_n\ideal{h}^n\in\mathcal{L}(\sigma)$, if $\ideal{h}_0=\cap_n\ideal{h}^n$, then $F\ideal{h}_0\subseteq\langle{F_{m,1},\ldots,F_{m,t_m}}\rangle$.
\end{proof}

\begin{corollary}
Let $\sigma$ be a finite type almost jansian hereditary torsion theory in $\rMod{A}$, and $\sigma'$ the induced hereditary torsion theory in $\rMod{\serial{A}{X_1,\ldots,X_n}}$. If $A$ is totally $\sigma$--noetherian, then $\serial{A}{X_1,\ldots,X_n}$ is totally $\sigma'$--noetherian.
\end{corollary}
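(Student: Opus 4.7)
The natural plan is induction on $n$, using the power-series Hilbert basis theorem just proved as the base case $n=1$. For the inductive step I would write
\[
\serial{A}{X_1,\ldots,X_n}=\serial{\serial{A}{X_1,\ldots,X_{n-1}}}{X_n},
\]
let $\iota\colon A\hookrightarrow\serial{A}{X_1,\ldots,X_{n-1}}$ be the canonical inclusion, and set $\tau=\iota(\sigma)$. The inductive hypothesis says that $\serial{A}{X_1,\ldots,X_{n-1}}$ is totally $\tau$--noetherian, so the task reduces to verifying that the hypotheses of the theorem apply to the pair $(\serial{A}{X_1,\ldots,X_{n-1}},\tau)$ when we adjoin one more indeterminate $X_n$, and then identifying the resulting induced torsion theory with the $\sigma'$ of the statement.

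First I would handle the two stability properties of $\tau$. That $\tau$ is of finite type is given by the earlier lemma on induced hereditary torsion theories. For the almost jansian property, I would take $\ideal{b}\in\mathcal{L}(\tau)$, so that $\iota^{-1}(\ideal{b})\in\mathcal{L}(\sigma)$, and use that $\sigma$ is almost jansian to conclude $\bigcap_k\iota^{-1}(\ideal{b})^k\in\mathcal{L}(\sigma)$. The elementary containment $\iota^{-1}(\ideal{b})^k\subseteq\iota^{-1}(\ideal{b}^k)$ gives
\[
\bigcap_k\iota^{-1}(\ideal{b})^k\;\subseteq\;\iota^{-1}\!\left(\bigcap_k\ideal{b}^k\right),
\]
so $\iota^{-1}\!\bigl(\bigcap_k\ideal{b}^k\bigr)\in\mathcal{L}(\sigma)$, i.e. $\bigcap_k\ideal{b}^k\in\mathcal{L}(\tau)$, as required.

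With $\tau$ now known to be finite type and almost jansian, applying the Hilbert basis theorem for power series to the ring $\serial{A}{X_1,\ldots,X_{n-1}}$ and the torsion theory $\tau$, adjoining the indeterminate $X_n$, yields that $\serial{A}{X_1,\ldots,X_n}$ is totally $\tau'$--noetherian, where $\tau'$ is the torsion theory on $\serial{A}{X_1,\ldots,X_n}$ induced from $\tau$ via $\serial{A}{X_1,\ldots,X_{n-1}}\hookrightarrow\serial{A}{X_1,\ldots,X_n}$. A routine verification on Gabriel filters, using functoriality of the preimage, identifies $\tau'$ with $\sigma'$: an ideal of $\serial{A}{X_1,\ldots,X_n}$ lies in $\mathcal{L}(\tau')$ iff its contraction to $\serial{A}{X_1,\ldots,X_{n-1}}$ lies in $\mathcal{L}(\tau)$, iff its further contraction to $A$ lies in $\mathcal{L}(\sigma)$, which is exactly the defining condition for $\mathcal{L}(\sigma')$.

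The expected main obstacle is the verification of the almost jansian property for $\tau$, since that hypothesis is genuinely used each time we invoke the theorem and does not come for free from any of the earlier lemmas; everything else is bookkeeping with the preimage description of $\mathcal{L}(f(\sigma))$ and an invocation of the induction hypothesis.
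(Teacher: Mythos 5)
The paper leaves this corollary without proof, so there is no official argument to compare against; your induction on $n$ via the identification $\serial{A}{X_1,\ldots,X_n}=\serial{\serial{A}{X_1,\ldots,X_{n-1}}}{X_n}$ is the natural route and your execution of it is correct. You also correctly identify the one genuinely new ingredient that the paper's earlier lemmas do not supply, namely that the almost jansian property is inherited by the induced torsion theory $\tau=\iota(\sigma)$: your argument, using $(\iota^{-1}\ideal{b})^k\subseteq\iota^{-1}(\ideal{b}^k)$ together with the fact that preimage commutes with intersection and that $\mathcal{L}(\sigma)$ is upward closed, is sound. The final identification $\tau'=\sigma'$ via $(j\circ\iota)^{-1}=\iota^{-1}\circ j^{-1}$ is likewise correct, and it is worth having spelled out, since the corollary's $\sigma'$ is defined as induced directly from $A$ while the inductive step produces a torsion theory induced in two stages. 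In short, this fills in exactly what the paper omits, and the same small lemma on preservation of the almost jansian property also justifies the analogous (and likewise unproved) polynomial corollary.
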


\section{Study through prime ideals}

Let $\ideal{p}\subseteq{A}$ be a prime ideal, we consider $\sAp$, the hereditary torsion theory cogenerated by $A/\ideal{p}$, or equivalently, the hereditary torsion theory generated by the multiplicatively subset $A\setminus\ideal{p}$. For every torsion theory $\sigma$ we consider the following sets of ideals:
\begin{enumerate}[(1)]\sepa
\item
$\mathcal{L}(\sigma)$, the Gabriel filter of $\sigma$.
\item
$\mathcal{Z}(\sigma)=\mathcal{L}(\sigma)\cap\Spec(A)$. In particular, if $\ideal{p}\subseteq\ideal{q}$ are prime ideals and $\ideal{p}\in\mathcal{Z}(\sigma)$, then $\ideal{q}\in\mathcal{Z}(\sigma)$.
\item
$C(A,\sigma)=\{\ideal{a}\mid\;A/\ideal{a}\in\mathcal{F}_\sigma\}$.
\item
$\mathcal{K}(\sigma)=C(A,\sigma)\cap\Spec(A)$; it is the complement of $\mathcal{Z}(\sigma)$ in $\Spec(A)$. In particular, if $\ideal{p}\subseteq\ideal{q}$ are prime ideals and $\ideal{q}\in\mathcal{Z}(\sigma)$, then $\ideal{p}\in\mathcal{Z}(\sigma)$.
\item
$\mathcal{C}(\sigma)=\Max\mathcal{K}(\sigma)$.
\end{enumerate}
If $\sigma$ is of finite type, then $\sigma=\wedge\{\sAp\mid\;\ideal{p}\in\mathcal{K}(\sigma)\}$. Otherwise, $\sigma=\wedge\{\sAp\mid\;\ideal{p}\in\mathcal{C}(\sigma)\}$ whenever $A$ is $\sigma$--noetherian, because $\sigma_{A\setminus\ideal{q}}\leq\sAp$ if $\ideal{p}\subseteq\ideal{q}$, for any prime ideals $\ideal{p},\ideal{q}$.

An $A$--module $M$ is \textbf{totally $\ideal{p}$--noetherian} whenever $M$ is totally $\sAp$--noetherian.

\begin{lemma}
Let $A$ be a local (non necessarily noetherian) ring with maximal ideal $\ideal{m}$, and $M$ be an $A$--module. The following statements are equivalent:
\begin{enumerate}[(a)]\sepa
\item
$M$ is noetherian.
\item
$M$ is totally $\sAm$--noetherian.
\end{enumerate}
\end{lemma}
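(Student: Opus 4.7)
The plan is to reduce everything to the observation that, in a local ring $(A,\ideal{m})$, the torsion theory $\sAm$ is trivial. Indeed, $\sAm = \sigma_{A\setminus\ideal{m}}$ is the principal hereditary torsion theory generated by the multiplicatively closed set $A\setminus\ideal{m}$, which in a local ring consists precisely of the units of $A$. Its Gabriel filter is
\[
\mathcal{L}(\sAm)=\{\ideal{a}\subseteq A\mid\;\ideal{a}\cap (A\setminus\ideal{m})\neq\varnothing\},
\]
and an ideal meets the set of units if and only if it equals $A$. Hence $\mathcal{L}(\sAm)=\{A\}$, which means $\sAm$ is the zero torsion theory: no nonzero module is $\sAm$-torsion and no proper submodule is $\sAm$-dense.

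Next I would unwind the definition of totally $\sAm$-finitely generated under this trivial filter. For any submodule $N\subseteq M$, totally $\sAm$-finite generation means there exist a finitely generated $F\subseteq N$ and $\ideal{h}\in\mathcal{L}(\sAm)$ with $N\ideal{h}\subseteq F$. Since $\ideal{h}$ must be $A$, we get $N=NA\subseteq F\subseteq N$, i.e. $N=F$ is finitely generated in the usual sense. Therefore "totally $\sAm$-finitely generated" is literally the same as "finitely generated" for submodules of $M$.

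From this the equivalence is immediate: $M$ is totally $\sAm$-noetherian iff every submodule of $M$ is totally $\sAm$-finitely generated iff every submodule of $M$ is finitely generated iff $M$ is noetherian. There is no genuine obstacle here; the content of the lemma is simply that passing to $\sAm$ for the maximal ideal of a local ring collapses the relative notion onto the classical one.
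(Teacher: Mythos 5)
Your proof is correct and is essentially the same as the paper's, which simply notes that every element of $A\setminus\ideal{m}$ is invertible in a local ring; you merely spell out the resulting fact that $\mathcal{L}(\sAm)=\{A\}$ and hence the relative notions collapse to the classical ones.
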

\begin{proof}
It is immediate because every element in $A\setminus\ideal{m}$ is invertible.
\end{proof}

\begin{proposition}\label{pr:20191030}
Let $\sigma$ a finite type hereditary torsion theory in $A$, and $M$ be an $A$--module. The following statements are equivalent:
\begin{enumerate}[(a)]\sepa
\item
$M$ is totally $\sigma$--noetherian.
\item
$M$ is totally $\sAp$--noetherian for every $\ideal{p}\in\mathcal{C}(\sigma)=\Max\mathcal{K}(\sigma)$.
\end{enumerate}
\end{proposition}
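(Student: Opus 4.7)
The plan is to prove (a)$\Rightarrow$(b) by a trivial inclusion argument and tackle (b)$\Rightarrow$(a) by a compactness construction. For (a)$\Rightarrow$(b), observe that for any $\ideal{p}\in\mathcal{C}(\sigma)\subseteq\mathcal{K}(\sigma)$ one has $\sigma\leq\sAp$, hence $\mathcal{L}(\sigma)\subseteq\mathcal{L}(\sAp)$, so any witness $(N',\ideal{h})$ for total $\sigma$--finite generation of a submodule $N\subseteq{M}$ is automatically a witness for total $\sAp$--finite generation.

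For (b)$\Rightarrow$(a), fix a submodule $N\subseteq{M}$. Since $\sAp$ is principal (its Gabriel filter is generated by $A\setminus\ideal{p}$), the hypothesis furnishes, for each $\ideal{p}\in\mathcal{C}(\sigma)$, an element $s_\ideal{p}\in{A\setminus\ideal{p}}$ and a finitely generated submodule $N_\ideal{p}\subseteq{N}$ with $Ns_\ideal{p}\subseteq{N_\ideal{p}}$. I would glue these local data into the (a priori non--finitely generated) ideal $\ideal{J}:=\sum_{\ideal{p}\in\mathcal{C}(\sigma)}s_\ideal{p}A$, and first verify that $\ideal{J}\in\mathcal{L}(\sigma)$. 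By construction $\ideal{J}\not\subseteq\ideal{p}$ for every $\ideal{p}\in\mathcal{C}(\sigma)$, so I would invoke a Zorn-type subsidiary lemma (enabled by $\sigma$ being of finite type) that every $\ideal{q}\in\mathcal{K}(\sigma)$ is contained in some $\ideal{p}\in\mathcal{C}(\sigma)$: the union of a chain in $\mathcal{K}(\sigma)$ through $\ideal{q}$ is prime, and if it lay in $\mathcal{Z}(\sigma)=\mathcal{L}(\sigma)\cap\Spec(A)$, then a finitely generated member of $\mathcal{L}(\sigma)$ contained in that union would be trapped in some chain member, contradicting the latter's membership in $\mathcal{K}(\sigma)$. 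Consequently $\ideal{J}\not\subseteq\ideal{q}$ for every $\ideal{q}\in\mathcal{K}(\sigma)$, i.e.\ $\ideal{J}\in\mathcal{L}(\sigma_{A\setminus\ideal{q}})$ for all such $\ideal{q}$; using $\sigma=\wedge\{\sigma_{A\setminus\ideal{q}}\mid\ideal{q}\in\mathcal{K}(\sigma)\}$ (valid for finite type $\sigma$) one concludes $\ideal{J}\in\mathcal{L}(\sigma)$.

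To finish I would apply finite type once more and choose a finitely generated $\ideal{h}\in\mathcal{L}(\sigma)$ with $\ideal{h}\subseteq\ideal{J}$. Finite generation forces $\ideal{h}\subseteq(s_{\ideal{p}_1},\ldots,s_{\ideal{p}_n})A$ for some finite collection $\ideal{p}_1,\ldots,\ideal{p}_n\in\mathcal{C}(\sigma)$. Setting $N':=N_{\ideal{p}_1}+\cdots+N_{\ideal{p}_n}$, a direct expansion of any $h=\sum_ia_is_{\ideal{p}_i}\in\ideal{h}$ gives $Nh\subseteq\sum_ia_i(Ns_{\ideal{p}_i})\subseteq{N'}$, so $N\ideal{h}\subseteq{N'}$ and $N$ is totally $\sigma$--finitely generated. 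The hard part will be this compactness step, extracting a finite subfamily from the possibly infinite collection $\{(s_\ideal{p},N_\ideal{p})\}_{\ideal{p}\in\mathcal{C}(\sigma)}$ of local witnesses; the finite type assumption on $\sigma$ does all the real work, both through producing a finitely generated $\ideal{h}\in\mathcal{L}(\sigma)$ inside $\ideal{J}$ and through the Zorn step that lets the maximal stratum $\mathcal{C}(\sigma)$ stand in for all of $\mathcal{K}(\sigma)$.
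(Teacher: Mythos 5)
Your proof follows essentially the same route as the paper's: glue the local witnesses $(s_\ideal{p},N_\ideal{p})$ into the ideal $\ideal{J}=\sum_{\ideal{p}}s_\ideal{p}A$, show $\ideal{J}\in\mathcal{L}(\sigma)$, use finite type to pull out a finitely generated $\ideal{h}\subseteq\ideal{J}$, and then observe that $\ideal{h}$ lives inside a finite subsum $\sum_{i=1}^n s_{\ideal{p}_i}A$ so that $N\ideal{h}\subseteq N_{\ideal{p}_1}+\cdots+N_{\ideal{p}_n}$. The one place where you go further than the paper is the Zorn argument showing that every $\ideal{q}\in\mathcal{K}(\sigma)$ sits under some $\ideal{p}\in\mathcal{C}(\sigma)$ (chains in $\mathcal{K}(\sigma)$ have prime unions, and a finitely generated dense ideal inside the union would be trapped in some link); the paper simply asserts $\mathcal{L}(\sigma)=\bigcap_{\ideal{m}\in\mathcal{C}(\sigma)}\mathcal{L}(\sAm)$, which in Section 5 is only justified under the extra hypothesis that $A$ is $\sigma$-noetherian. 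Your subsidiary lemma shows it already holds for finite type $\sigma$, so you have supplied a justification the paper leaves implicit.
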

\begin{proof}
(a) $\Rightarrow$ (b). %
It is immediate because $\sigma\leq\sAp$.
\par
(b) $\Rightarrow$ (a). %
Let $N\subseteq{M}$, for every $\ideal{m}\in\mathcal{C}(\sigma)$ there exists $s_\ideal{m}\in{A\setminus\ideal{m}}$, and $H_\ideal{m}\subseteq{N}$, finitely generated, such that $Ns_\ideal{m}\subseteq{H_\ideal{m}}\subseteq{N}$. Since $\ideal{b}=\sum_\ideal{m}s_\ideal{m}A$ belongs to $\mathcal{L}(\sigma)=\cap_\ideal{m}\mathcal{L}(\sAm)$, there exists $\ideal{c}\in\mathcal{L}(\sigma)$, finitely generated, such that $\ideal{c}\subseteq\ideal{b}$. In consequence, there are finitely many elements $s_{\ideal{m}_1},\ldots,s_{\ideal{m}_t}$ such that $\ideal{c}\subseteq\sum_{i=1}^ts_{\ideal{m}_i}A\subseteq\ideal{b}$, and we have $N\ideal{c}\subseteq\sum_{i=1}^tH_{\ideal{m}_i}\subseteq{N}$. Hence, $N$ is totally $\sigma$--finitely generated, and $M$ is totally $\sigma$--noetherian.
\end{proof}

When we take the hereditary torsion theory $\sigma=0$, i.e, when $\mathcal{L}(\sigma)=\{A\}$, we have that $M$ is noetherian if, and only if, $M$ is totally $\sAm$--noetherian for every maximal ideal $\ideal{m}\in\Supp(M)$, which is \cite[Proposition~12]{Anderson/Dumitrescu:2002}.

\section{Principal ideal rings}

Let $\sigma$ be a hereditary torsion theory in $\rMod{A}$, and $\ideal{a}\subseteq{A}$ an ideal, then we have:
\begin{enumerate}[(1)]\sepa
\item
$\ideal{a}$ is \textbf{$\sigma$--principal} if there exists $a\in\ideal{a}$ such that $\Clt{\sigma}{A}{aA}=\Clt{\sigma}{A}{\ideal{a}}$.
\item
$A$ is a \textbf{$\sigma$--principal ideal ring}, $\sigma$-PIR, whenever every ideal is $\sigma$--principal.
\item
$A$ is a \textbf{totally $\sigma$--principal ideal ring}, totally $\sigma$--PIR, whenever every ideal is totally $\sigma$--principal.
\end{enumerate}

\begin{proposition}[Kaplansky's Theorem]
Let $\sigma$ be a finite type hereditary torsion theory in $\rMod{A}$, the following statements are equivalent:
\begin{enumerate}[(a)]\sepa
\item
$A$ is totally $\sigma$--PIR.
\item
Every prime ideal $\ideal{p}\in\mathcal{K}(\sigma)$ is totally $\sigma$--principal.
\end{enumerate}
\end{proposition}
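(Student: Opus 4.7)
The plan is to adapt Kaplansky's classical argument, using Cohen's theorem to reduce the problem to the totally $\sigma$--noetherian setting. The implication (a) $\Rightarrow$ (b) is immediate from the definitions. For (b) $\Rightarrow$ (a), I first observe that every totally $\sigma$--principal ideal is a fortiori totally $\sigma$--finitely generated, so hypothesis (b) implies the hypothesis of the Cohen-type corollary following Proposition~\eqref{pr:140707}, whence $A$ is totally $\sigma$--noetherian. Let $\Gamma$ denote the family of ideals of $A$ which fail to be totally $\sigma$--principal, and assume for a contradiction that $\Gamma\ne\varnothing$. A standard Zorn argument, using that the witnessing $\ideal{h}\in\mathcal{L}(\sigma)$ can be taken finitely generated (so that any putative totally $\sigma$--principal structure on the union $\bigcup_i\ideal{a}_i$ of a chain descends to some $\ideal{a}_j$), produces a maximal element $\ideal{a}\in\Gamma$.

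The bulk of the argument is to show that $\ideal{a}$ is prime. Suppose $x,y\in A\setminus\ideal{a}$ with $xy\in\ideal{a}$. By maximality, $\ideal{a}+xA\supsetneq\ideal{a}$ is totally $\sigma$--principal: write $r=a_0+xc_0$ with $a_0\in\ideal{a}$ and let $\ideal{h}_1\in\mathcal{L}(\sigma)$ be finitely generated with $(\ideal{a}+xA)\ideal{h}_1\subseteq rA$. Since $yr=ya_0+c_0(xy)\in\ideal{a}$, one has $y\in(\ideal{a}:r)\setminus\ideal{a}$, so $(\ideal{a}:r)\supsetneq\ideal{a}$ is again totally $\sigma$--principal: write $(\ideal{a}:r)\ideal{h}_2\subseteq sA$ with $s\in(\ideal{a}:r)$ and finitely generated $\ideal{h}_2\in\mathcal{L}(\sigma)$. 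A direct two-step chase then shows $\ideal{a}\ideal{h}_1\ideal{h}_2\subseteq rsA\subseteq\ideal{a}$: for $z\in\ideal{a}$ and $h\in\ideal{h}_1$ write $zh=rt\in rA\cap\ideal{a}$, so $t\in(\ideal{a}:r)$; then for $h'\in\ideal{h}_2$, $th'=su$, yielding $zhh'=rsu\in rsA$. Since $\ideal{h}_1\ideal{h}_2\in\mathcal{L}(\sigma)$ is finitely generated and $rs\in r\cdot(\ideal{a}:r)\subseteq\ideal{a}$, this makes $\ideal{a}$ totally $\sigma$--principal, contradicting $\ideal{a}\in\Gamma$.

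Hence $\ideal{a}$ is prime. If $\ideal{a}\in\mathcal{K}(\sigma)$, hypothesis (b) yields the contradiction directly. The main obstacle I expect is the remaining case $\ideal{a}\in\mathcal{Z}(\sigma)$: then $\ideal{a}\in\mathcal{L}(\sigma)$ but, unlike in the $S$--noetherian setting where $\ideal{a}\cap S\ne\varnothing$ supplies a principal generator of $\ideal{a}$ inside $\mathcal{L}(\sigma_S)$, here finite type only guarantees a finitely generated $\ideal{k}\in\mathcal{L}(\sigma)$ contained in $\ideal{a}$. To close this case I would again apply the Kaplansky trick to $\ideal{a}+xA$ for some $x\notin\ideal{a}$: if the principal witness $r$ lies in $\ideal{a}$ then $\ideal{a}\ideal{h}_1\subseteq rA\subseteq\ideal{a}$ makes $\ideal{a}$ totally $\sigma$--principal outright, while if $r\notin\ideal{a}$ then primality gives $rA\cap\ideal{a}=r\ideal{a}$, and combining this identity with the $\sigma$--density of $\ideal{k}$ should extract the required principal witness after a finite descent. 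This last step is the most delicate part of the proof and the analogue of, but subtler than, the corresponding trivial step in the $S$--noetherian setting.
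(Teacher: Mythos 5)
Your (a)$\Rightarrow$(b) direction, the Zorn argument, and the Kaplansky chase establishing that a maximal $\ideal{a}\in\Gamma$ is prime are all correct (and your chase is cleaner than the paper's, which is marred by typos such as ``if $x\in A$ $\ldots$ if $x\notin A$''). The preliminary reduction via Cohen's theorem is also sound, though the paper does not use it. However, the gap you flag for the case $\ideal{a}\in\mathcal{Z}(\sigma)$ is real, and the ``finite descent'' you sketch does not close it. The paper's own proof attempts to bypass this case by arguing \emph{first} that the maximal $\ideal{a}\in\Gamma$ is $\sigma$-closed, via the assertion that if $\Clt{\sigma}{A}{\ideal{a}}$ were totally $\sigma$-principal then $\ideal{a}$ would be too; once $\ideal{a}$ is both $\sigma$-closed and prime it lies in $\mathcal{K}(\sigma)$ automatically, and $\sigma$-closedness is also what powers the primeness step (it guarantees $(\ideal{a}:c)\notin\mathcal{L}(\sigma)$ for $c\notin\ideal{a}$). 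But that assertion is not justified in the paper, and it, too, founders on exactly the obstruction you describe.

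In fact the difficulty appears to lie in the statement, not merely in the proofs. Let $A=k[x,y]_{(x,y)}$, a two-dimensional regular local ring (hence a UFD), $\ideal{m}=(x,y)$, and let $\sigma$ be the $\ideal{m}$-adic hereditary torsion theory with $\mathcal{L}(\sigma)=\{\ideal{b}\subseteq A\mid\ideal{m}^n\subseteq\ideal{b}\ \textrm{for some }n\}$; this is a finite-type Gabriel filter. Here $\mathcal{Z}(\sigma)=\{\ideal{m}\}$ and $\mathcal{K}(\sigma)$ consists of $(0)$ together with the height-one primes, all of which are principal in the UFD $A$, hence trivially totally $\sigma$-principal (take $\ideal{h}=A$); so (b) holds. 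Yet $\ideal{m}$ is not totally $\sigma$-principal: $\ideal{m}\ideal{m}^n\subseteq aA$ would force $a\mid x^{n+1}$ and $a\mid y^{n+1}$, so $a$ would be a unit, impossible for $a\in\ideal{m}$. So (a) fails. This also shows the paper's unproved claim is false for this $\ideal{a}=\ideal{m}$, since $\Clt{\sigma}{A}{\ideal{m}}=A$ is totally $\sigma$-principal while $\ideal{m}$ is not. The implication (b)$\Rightarrow$(a) needs $\sigma$ to be of \emph{principal} type, not merely of finite type: principal type supplies a principal $\sigma$-dense sub-ideal of any $\sigma$-dense ideal, which is precisely what makes the $\mathcal{Z}(\sigma)$ case trivial in the $S$-noetherian specialization and is the missing ingredient in both your argument and the paper's.
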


See also \cite[Proposition~16]{Anderson/Dumitrescu:2002}.

\begin{proof}
(a) $\Rightarrow$ (b). %
Since for every $\ideal{p}\in\mathcal{K}(\sigma)$, the result holds.
\par
(b) $\Rightarrow$ (a). %
Let $\Gamma=\{\ideal{a}\subseteq{A}\mid\;\ideal{a}\textrm{ is not totally $\sigma$--principal}\}$. If $\Gamma\neq\varnothing$, since every chain in $\Gamma$ has a upper bound in $\Gamma$, there exist maximal elements in $\Gamma$. If $\{\ideal{a}_i\}_i\subseteq\Gamma$ is a chain, we take $\ideal{a}=\cup_i\ideal{a}_i$. If $\ideal{a}$ is not totally $\sigma$--principal, there are $\ideal{b}\in\mathcal{L}(\sigma)$, finitely generated, and $a\in\ideal{a}$ such that $\ideal{a}\ideal{b}\subseteq{aA}\subseteq\ideal{a}$, and there exists an index $i$ such that $a\in\ideal{a}_i$, hence $\ideal{a}_i\ideal{b}\subseteq\ideal{a}\ideal{b}\subseteq{aA}\subseteq\ideal{a}_i$, which is a contradiction.
\par
Let $\ideal{a}\in\Gamma$ maximal; if $\Clt{\sigma}{A}{\ideal{a}}$ is totally $\sigma$--principal, then $\ideal{a}$ is totally $\sigma$--principal, hence $\ideal{a}=\Clt{\sigma}{A}{\ideal{a}}$ is $\sigma$--closed.
\par
Let us show that $\ideal{a}$ is prime. Let $a,b\in{A\setminus\ideal{a}}$ such that $ab\in\ideal{a}$. Since $\ideal{a}+aA$ is totally $\sigma$--principal, there exists $x\in\ideal{a}+aA$ and $\ideal{b}\in\mathcal{L}(\sigma)$, finitely generated, such that $(\ideal{a}+aA)\ideal{b}\subseteq{xA}\subseteq\ideal{a}+aA$. If $x\in{A}$, then $\ideal{b}\subseteq(\ideal{a}:a)\notin\mathcal{L}(\sigma)$, which is a contradiction. If $x\notin{A}$, then $\ideal{b}\subseteq(\ideal{a}:x)\notin\mathcal{L}(\sigma)$, which is a contradiction.
In consequence, we can find a prime ideal $\ideal{a}\in\mathcal{K}(\sigma)$ which is not totally $\sigma$--principal which is a contradiction.
\end{proof}

\begin{corollary}
Let $\sigma$ be a finite type hereditary torsion theory, the following statements are equivalent:
\begin{enumerate}[(a)]\sepa
\item
$A$ is totally $\sigma$--PIR.
\item
$A$ is $\sigma$-PIR and every $\ideal{p}$ is totally $\sigma$--finitely generated.
\end{enumerate}
In particular, $A$ is totally $\sigma$-PIR if, and only if, $A$ is $\sigma$--PIR and totally $\sigma$--noetherian.
\end{corollary}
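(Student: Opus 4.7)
The plan is to reduce both implications, and the final ``in particular'' clause, to material already in the excerpt.

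For (a) $\Rightarrow$ (b) I would simply unpack definitions. Any totally $\sigma$--principal ideal $\ideal{a}$ satisfies $\ideal{a}\ideal{h}\subseteq aA\subseteq\ideal{a}$ for some $a\in\ideal{a}$ and some finitely generated $\ideal{h}\in\mathcal{L}(\sigma)$, which simultaneously exhibits $\ideal{a}$ as totally $\sigma$--finitely generated (take $\ideal{a}'=aA$) and forces $\Clt{\sigma}{A}{aA}=\Clt{\sigma}{A}{\ideal{a}}$, i.e.\ $\ideal{a}$ is $\sigma$--principal. Both halves of (b) follow at once, and the conclusion for primes is the specialisation to $\ideal{a}=\ideal{p}$.

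For (b) $\Rightarrow$ (a) the idea is to upgrade each $\sigma$--principal witness to a totally $\sigma$--principal one, using total $\sigma$--noetherianness as the bridge. Cohen's theorem (the Corollary to Proposition~\eqref{pr:140707}) converts the prime hypothesis into the global statement that $A$ is totally $\sigma$--noetherian. Fix $\ideal{a}\subseteq A$; by $\sigma$--PIR there is $a\in\ideal{a}$ with $\ideal{a}/aA\in\mathcal{T}_\sigma$. Since totally $\sigma$--noetherianness passes to submodules and quotients (the first proposition of Section~1), $\ideal{a}/aA$ is itself totally $\sigma$--noetherian as well as $\sigma$--torsion. The only substantive step is the auxiliary claim that any $\sigma$--torsion, totally $\sigma$--noetherian module is in fact totally $\sigma$--torsion: pick a finitely generated $N\subseteq\ideal{a}/aA$ and $\ideal{h}\in\mathcal{L}(\sigma)$ with $(\ideal{a}/aA)\ideal{h}\subseteq N$, then intersect the annihilators of the finitely many generators of $N$ to obtain $\ideal{h}'\in\mathcal{L}(\sigma)$ killing $N$, so $(\ideal{a}/aA)\ideal{h}\ideal{h}'=0$ with $\ideal{h}\ideal{h}'\in\mathcal{L}(\sigma)$. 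Lifting back to $\ideal{a}$ gives $\ideal{a}\ideal{h}\ideal{h}'\subseteq aA$, proving $\ideal{a}$ is totally $\sigma$--principal.

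The ``in particular'' clause is then pure bookkeeping: Cohen's theorem identifies ``every prime $\ideal{p}\in\mathcal{K}(\sigma)$ is totally $\sigma$--finitely generated'' with ``$A$ is totally $\sigma$--noetherian'', and primes in $\mathcal{Z}(\sigma)$ are $\sigma$--dense, hence automatically totally $\sigma$--finitely generated since $\sigma$ is of finite type. The only genuine point in the whole argument is the auxiliary lemma above, which is a one--step iteration of the observation opening Section~1 that a finitely generated $\sigma$--torsion module is totally $\sigma$--torsion; I expect no real obstacle beyond isolating and naming this lemma cleanly.
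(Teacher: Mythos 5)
Your proof is correct, but it takes a genuinely different path from the paper's. The paper works only with primes $\ideal{p}\in\mathcal{K}(\sigma)$: using the hypothesis that $\ideal{p}$ is totally $\sigma$--finitely generated it first squeezes $\ideal{p}\ideal{h}_1\subseteq(p_1,\ldots,p_t)\subseteq\ideal{p}$, then applies $\sigma$--PIR to the \emph{finitely generated} ideal $(p_1,\ldots,p_t)$ to get $(p_1,\ldots,p_t)\ideal{h}_2\subseteq pA\subseteq(p_1,\ldots,p_t)$, multiplies the two ideals in $\mathcal{L}(\sigma)$ together, and then lets Kaplansky's theorem (the proposition immediately preceding this corollary) promote ``every $\ideal{p}\in\mathcal{K}(\sigma)$ is totally $\sigma$--principal'' to ``$A$ is totally $\sigma$--PIR.'' You instead route through Cohen's theorem to obtain global total $\sigma$--noetherianness first, and then handle an \emph{arbitrary} ideal $\ideal{a}$ directly by showing $\ideal{a}/aA$ is a $\sigma$--torsion, totally $\sigma$--noetherian module and hence totally $\sigma$--torsion, via your auxiliary lemma. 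Both arguments ultimately rest on the same elementary observation opening Section~1 (a finitely generated $\sigma$--torsion module is totally $\sigma$--torsion): the paper uses it implicitly when it upgrades the $\sigma$--principal witness for the finitely generated ideal $(p_1,\ldots,p_t)$ to a totally $\sigma$--principal one, while you isolate and state it as a one--step iteration. The trade-off: the paper's argument is shorter and needs only Kaplansky plus the prime hypothesis, while yours sidesteps Kaplansky entirely, is uniform over all ideals rather than just primes, and makes the hidden ``finitely generated + $\sigma$--principal $\Rightarrow$ totally $\sigma$--principal'' step visible, which arguably clarifies why the ``in particular'' clause falls out as bookkeeping.
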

\begin{proof}
(a) $\Rightarrow$ (b). %
It is evident.
\par
(b) $\Rightarrow$ (a). %
Let $\ideal{p}\in\mathcal{K}(\sigma)$, by the hypothesis, there exist $\ideal{h}_1\in\mathcal{L}(\sigma)$, finitely generated, and $p_1,\ldots,p_t\in\ideal{p}$ such that $\ideal{p}\ideal{h}_1\subseteq(p_1,\ldots,p_t)\subseteq\ideal{p}$. Otherwise, there exists $p\in(p_1,\ldots,p_t)$ and $\ideal{h}_2\in\mathcal{L}(\sigma)$, finitely generated, such that $(p_1,\ldots,p_t)\ideal{h}_2\subseteq{pA}\subseteq(p_1,\ldots,p_t)$. Observe that $\Clt{\sigma}{A}{pA}=\ideal{p}=\Clt{\sigma}{A}{p_1,\ldots,p_t}$. Therefore we have
\[
\ideal{p}\ideal{h}_1\ideal{h}_2
\subseteq(p_1,\ldots,p_t)\ideal{h}_2
\subseteq{pA}
\subseteq\ideal{p},
\]
and $\ideal{p}$ is totally $\sigma$--finitely generated.
\end{proof}

\providecommand{\bysame}{\leavevmode\hbox to3em{\hrulefill}\thinspace}
\providecommand{\MR}{\relax\ifhmode\unskip\space\fi MR }
\providecommand{\MRhref}[2]{%
  \href{http://www.ams.org/mathscinet-getitem?mr=#1}{#2}
}
\providecommand{\href}[2]{#2}

\end{document}